\documentclass[11pt, reqno]{amsart}
\usepackage{amsmath}
\usepackage{amssymb}
\usepackage{amsthm}
\usepackage{amsfonts}
\usepackage{bm}
\usepackage{braket}
\usepackage{mathrsfs,centernot,stmaryrd,color,tensor,wasysym}
\usepackage[truedimen,margin=25truemm]{geometry}
\usepackage{tcolorbox}
\usepackage{mathtools}
\mathtoolsset{showonlyrefs}

\usepackage{hyperref}

\allowdisplaybreaks

\theoremstyle{plain}
\newtheorem{th.}{Theorem}[section]
\newtheorem{theorem}{Theorem}[section]

\newtheorem{prop.}[th.]{Proposition}

\newtheorem{lem.}[th.]{Lemma}
\newtheorem{lemma}[th.]{Lemma}
\newtheorem{cor.}[th.]{Corollary}
\newtheorem{corollary}[th.]{Corollary}
\newtheorem{def.}[th.]{Definition}
\newtheorem{rmk.}[th.]{Remark}
\newtheorem{remark}[th.]{Remark}
\newtheorem{conj.}[th.]{Conjecture}

\makeatletter
\newcommand\thankssymb[1]{\textsuperscript{\@fnsymbol{#1}}}
\makeatother

\newcommand{\BC}{\mathbb{C}}
\newcommand{\BD}{\mathbb{D}}

\newcommand{\BR}{\mathbb{R}}
\newcommand{\BS}{\mathbb{S}}
\newcommand{\BT}{\mathbb{T}}

\newcommand{\vF}{{\mathbf{F}}}
\newcommand{\vG}{{\mathbf{G}}}

\newcommand{\vV}{{\mathbf{V}}}

\newcommand{\vX}{{\mathbf{X}}}

\newcommand{\CB}{\mathcal{B}}

\newcommand{\CD}{\mathcal{D}}

\newcommand{\CF}{\mathcal{F}}

\newcommand{\CH}{\mathcal{H}}

\newcommand{\CJ}{\mathcal{J}}

\newcommand{\CL}{\mathcal{L}}
\newcommand{\CM}{\mathcal{M}}
\newcommand{\CN}{\mathcal{N}}

\newcommand{\CR}{\mathcal{R}}
\newcommand{\CS}{\mathcal{S}}

\newcommand{\CU}{\mathcal{U}}

\newcommand{\CW}{\mathcal{W}}

\newcommand{\FS}{\mathfrak{S}}

\newcommand{\al}{\alpha}

\newcommand{\ga}{\gamma}
\newcommand{\de}{\delta}
\newcommand{\ep}{\varepsilon}

\newcommand{\ph}{\varphi}
\newcommand{\ta}{\tau}

\renewcommand{\th}{\theta}
\newcommand{\si}{\sigma}

\newcommand{\rh}{\rho}
\newcommand{\ch}{\chi}

\newcommand{\De}{\Delta}
\newcommand{\Ph}{\Phi}

\newcommand{\pl}{\partial}
\newcommand{\wt}{\widetilde}
\newcommand{\wh}{\widehat}

\newcommand{\Dk}[1]{\left[#1\right]}
\newcommand{\K}[1]{\left(#1\right)}
\newcommand{\Br}[1]{\langle #1 \rangle}

\newcommand{\abs}[1]{\left|#1\right|}
\newcommand{\No}[1]{\left\| #1 \right\|}

\def\bra#1{\langle#1\rangle}

\newcommand{\I}{\infty}

\newcommand{\Tr}{{\rm Tr}}

\newcommand{\sd}{\langle \nabla \rangle}

\newcommand{\sdi}{\sd^{-1}}

\newcommand{\lxr}{\langle \xi \rangle}

\newcommand{\tx}{{t,x}}
\newcommand{\Sa}{{\FS^\al}}

\newcommand{\Sxtx}{{\FS^{2\al'}_{x \to (t,x)}}}
\newcommand{\Stxx}{{\FS^{2\al'}_{(t,x) \to x}}}
\newcommand{\st}{\star}

\newcommand{\ft}{{\frac{1}{4}}}
\newcommand{\fh}{\frac{3}{4}}

\newcommand{\R}{\mathbb{R}}

\newcommand{\tw}{\frac{1}{2}}
\newcommand{\na}{\nabla}
\newcommand{\oc}{\ocircle}
\newcommand{\os}{\oast}
\newcommand{\ls}{\lesssim}

\title[Stationary solutions of the Hartree and Schr\"{o}dinger equations]{Asymptotic stability of a wide class of stationary solutions for the Hartree and Schr\"{o}dinger equations for infinitely many particles}
\author{Sonae Hadama\thankssymb{2}}
\thanks{\thankssymb{2} Research Institute for Mathematical Sciences, Kyoto University, Kita-Shirakawa, Sakyo-ku, Kyoto, Japan 606-8502.
E-mail address: \texttt{hadama@kurims.kyoto-u.ac.jp}}

\begin{document}
\maketitle

\begin{abstract}
We consider the Hartree and Schr\"{o}dinger equations describing the time evolution of wave functions of infinitely many interacting fermions in three-dimensional space.
These equations can be formulated using density operators, and they have infinitely many stationary solutions.
In this paper, we prove the asymptotic stability of a wide class of stationary solutions.
We emphasize that our result includes Fermi gas at zero temperature.
This is one of the most important steady states from the physics point of view; however, its asymptotic stability has been left open after the seminal work of Lewin and Sabin \cite{2014LS}, which first formulated this stability problem and gave significant results.
\end{abstract}

\vspace{+4mm}

\noindent {\bf 2020 MSC:} Primary, 35Q40. Secondary, 35B35,  35B40.

\noindent {\bf Keywords and phrases---}
Hartree equation, Cubic NLS, Asymptotic stability, Scattering, Strichartz estimates, Fermi gas at zero temperature.

\tableofcontents

\section{Introduction}
In this paper, we study the following Hartree equation in three-dimensional space:
\begin{align} \tag{NLH} \label{NLH}
 i \pl_t \ga &= [- \Delta + w \ast \rho_\ga, \ga], \quad \ga:\BR \to \CB(L^2_x).
\end{align}      
This is a nonlinear evolution equation for operator-valued functions.
We denote the set of all bounded linear operators on $L^2_x := L^2(\BR^3)$ by $\CB(L^2_x)$,
convolution in space by $\ast$ and commutator by $[\cdot, \cdot]$.
We assume that $w$ is a given finite signed Borel measure on $\BR^3$.
For $A \in \CB(L^2_x)$, $\rh_A(x) : \BR^3_x \to \BC$ is the density function of $A$, 
that is, $\rh_A(x):=k(x,x)$, where $k(x,y)$ is the integral kernel of $A$.
When $w = \pm \de$, we should call \eqref{NLH} the cubic nonlinear Schr\"{o}dinger equation;
however, we consistently call \eqref{NLH} the Hartree equation in the rest of this paper. 

We briefly explain the background and fundamental properties of \eqref{NLH} for convenience according to \cite{2015LS}. 
See \cite[Introduction]{2015LS} for more details.

Wave functions of $N$ fermions in three-dimensional space evolve according to a linear Schr\"{o}dinger equation on $\BR^{3N}$.
The Hartree equation is one of the approximations of this equation:
\begin{align} \label{originalH}
            i \pl_t u_n(t,x) = \left(- \Delta_x +  w \ast \left( \sum_{j=1}^N |u_j(t,x)|^2 \right) \right) u_n(t,x),
              \quad (t,x) \in \BR \times \BR^3, \quad (n=1, \dots N).
\end{align}
If we try to deal with the case $N = \I$, namely, the Hartree equation for infinitely many particles,
it seems somewhat difficult to do our analysis in this form.
Thus, it is a good idea to rewrite the original equation \eqref{originalH}.
For a solution to \eqref{originalH}, $(u_n(t))_{n=1}^N \subset C(\BR, L^2_x)$,
define the operator-valued function $\ga : \BR \to \CB(L^2_x)$ by
\begin{equation*}
    \ga(t) = \sum_{n=1}^N |u_n(t)\rangle  \langle u_n(t)|,
\end{equation*}
where $|f \rangle \langle g | : \phi \mapsto \Br{g | \phi}_{L^2_x} f$.
It is quite easy to see that $\ga(t)$ solves \eqref{NLH}.
\eqref{NLH} is more suitable for our analysis because it is not dependent on the number of particles $N$.

The following proposition is easy to prove, but it is one of the most essential properties of \eqref{NLH}.
\begin{prop.}\label{SW}
 Let $w$ be a finite Borel measure on $\BR^3$.
 Let $f \in L^1_\xi \cap L^\I_\xi$.
 Let $\ga_f := f(- i \nabla)= \CF^{-1} f \CF$.
 Then $\ga_f$ is a stationary solution to \eqref{NLH}.
\end{prop.}
By Proposition \ref{SW}, we have infinitely many stationary solutions to \eqref{NLH}.
Furthermore, it is known that some $f$ are important from the physics perspective:
\begin{align}
  &f(\xi) = \ch_{\{|\xi|^2 \leq \mu\}}(\xi),\quad  \mbox{(Fermi gas at zero temperature and $\mu >0$)}, \label{P1} \\
  &f(\xi) = \frac{1}{e^{(|\xi|^2-\mu)/T}+1}, \quad \mbox{(Fermi gas at positive temperature $T$ and $\mu \in \R$)},  \label{P2} \\
  &f(\xi) = \frac{1}{e^{(|\xi|^2-\mu)/T}-1}, \quad \mbox{(Bose gas at positive temperature $T$ and $\mu < 0$)}, \label{P3} \\
  &f(\xi) = e^{-(|\xi|^2-\mu)/T}, \quad \mbox{(Boltzmann gas at positive temperature $T$ and $\mu \in \R$)},  \label{P4}
\end{align}
where $\ch_A$ is the indicator function of $A$ and each $\mu$ is chemical potential.
Proposition \ref{SW} tells us that \eqref{NLH} with non-trace class initial values is important since $\ga_f$ is not compact unless $f=0$.

In this paper, we study the asymptotic stability of $\ga_f$.
Let $Q(t)$ be a perturbation from $\ga_f$, that is, $Q(t) = \ga(t) - \ga_f$.
Then we have
\begin{align} \tag{$f$-NLH} \label{$f$-NLH}
i \pl_t Q = [-\De + w \ast \rh_Q, \ga_f + Q].
\end{align}

We denote the Schatten $\al$-class on a Hilbert space $H$ by $\FS^\al(H)$, that is, 
\begin{align}
	\|A\|_{\FS^\al}:= \K{\Tr(|A|^\al) }^{\frac{1}{\al}},
\end{align}
where $\Tr$ is the trace in $H$.
We write $\FS^\al:=\FS^\al(L^2_x)$.
We refer to \cite{2005Simon} for more details on the Schatten classes.
Note that $\Tr A$ is the total number of particles contained in the system for any density operator $A$,
and $\Tr A = \I$ at least formally if $A \in \FS^\al \setminus \FS^1$ for some $\al >1$.

The well-posedness of the Cauchy problem of \eqref{NLH} with $\ga(0)=\ga_0$ has been studied by \cite{1974BDF, 1976BDF, 1976C, 1992Z}, 
and recently, the small data scattering is shown in \cite{2021PS}.
More precisely, the above results dealt with more general nonlinear terms.
However, they considered only trace class operators; their arguments are in the framework of finite particle systems.
We need to emphasize that Lewin and Sabin \textit{first} formulated \eqref{$f$-NLH} and gave significant results in \cite{2015LS, 2014LS}.
In \cite{2015LS}, they proved the local and global well-posedness of \eqref{$f$-NLH} with non-trace class initial data when the interaction potential $w$ is in $L^1_x \cap L^\I_x$.
In \cite{2014LS}, they showed asymptotic stability of $\ga_f$ for small initial perturbations $Q_0$ in two-dimensional space.
They assumed that interaction potential $w$ is in the Sobolev space $W^{1,1}(\BR^2)$ and $f$ is smooth.
Chen, Hong and Pavlovi\'{c} extended the above results.
In \cite{2017CHP}, they proved the global well-posedness when the potential $w$ is the delta measure and $f$ is Fermi gas at zero temperature.
In \cite{2018CHP}, they extended the result in \cite{2014LS} and proved scattering when $d \ge 3$.
However, they also assumed that $f$ is smooth and $w$ satisfies somewhat complicated conditions.
The Hartree equation with a constant magnetic field also has infinitely many stationary solutions.
In \cite{2021D}, Dong gave a well-posedness result of the Cauchy problem that initial data are given around these stationary solutions.

Strichartz estimates for orthonormal functions is one of the most essential tools for their analysis.
This is first proven in \cite{2014FLLS}, and developed in \cite{2017FS, 2019BHLNS}.

In relation to \cite{2015LS, 2014LS}, Lewin and Sabin gave rigorous proof that the semi-classical limit of the Hartree equation for infinitely many particles is the Vlasov equation in \cite{2020LS}.
And they get the global well-posed result for the Vlasov equation as a by-product.

We rewrote \eqref{originalH} in terms of density operators on $L^2_x$,
and all of the known results mentioned above are based on this formulation; 
however, de Suzzoni gave an alternative formulation by random fields in \cite{2015S}.
She clarified the correspondence with the formulation using density operators in detail:
For example, we have infinitely many steady states corresponding to $\ga_f$ in random fields formulation.
In \cite{2020CS}, Collot and de Suzzoni proved their asymptotic stability when $d\geq 4$, and they extended their result to $d=2,3$ in \cite{2022CS}.
Remarkably, the general finite Borel measure on $\R^3$ is allowed in \cite{2022CS}.

In the previous works, the \textit{asymptotic} stability of $\ga_f$ given by \eqref{P2}, \eqref{P3} and \eqref{P4}, which are physically important examples at positive temperature, was proved.
However, the asymptotic stability of Fermi gas at zero temperature \eqref{P1} has been left open since it was mentioned as an open question in the seminal work \cite{2014LS}.
Moreover, no scattering result allows singular interaction potentials at the density operator level.
In this paper, we prove the asymptotic stability of $\ga_f$ when $f$ is in a wide class including $\ch_{\{|\xi|^2\le 1\}}$, allowing general finite Boreal measure on $\BR^3$.

\subsection{Main result}
We write $U(t):=e^{it\De}$.
Define $A_\st B := A B A^*$ for two operators $A$ and $B$.
For $s \ge 0$ and $\al \in [1,\I]$, we define the Schatten--Sobolev space $\CH^{s,\al}$ by
\begin{align}
 \|A\|_{\CH^{s,\al}} := \|\sd^s A \sd^s\|_\Sa.
\end{align}
We write $\CH^{s}:= \CH^{s,2}$.
For $w$ and $f$, we define a linear operator $\CL_1 = \CL_1(f,w)$ by
\begin{align}
	\CL_1[g](t) := \rh \K{i\int_0^t U(t-\ta)_\st [w \ast g(\ta), \ga_f] d\ta}.
\end{align}

Our main result is the following:
\begin{th.} \label{main}
	Let $w$ be a given finite signed Borel measure on $\BR^3$.
	Let $f \in L^1_\xi \cap L^\I_\xi$ satisfy $\|\lxr^2 f\|_{L^1_\xi \cap L^\I_\xi} < \I$.
	Assume that the operator $\CL_1 = \CL_1(f,w)$ satisfies
	\begin{align}
		\|(1+\CL_1)^{-1}\|_{\CB(L^2_{t,x})} < \I.
	\end{align}
	Then there exists $\ep_0 > 0$ such that the following holds:
	If $\| Q_0 \|_{\CH^{1/2,3/2}} \leq \ep_0$,
	then there exists a unique global solution $Q(t) \in C(\BR, \CH^\tw)$ to \eqref{$f$-NLH} with $Q(0)=Q_0$
	such that $\rh_Q \in L^2_t(\BR, H^\tw_x)$.
	Furthermore, $Q(t)$ scatters; that is,
	there exist $Q_{\pm} \in \FS^3$ such that
	\begin{align}
		U(-t)Q(t)U(t) \to Q_\pm \mbox{ in }  \FS^3 \mbox{ as } t \to \pm \I.
	\end{align}
\end{th.}

First, we discuss the invertibility of $1+\CL_1$.
Some sufficient conditions for $(1+\CL_1)^{-1} \in \CB(L^2_{t,x})$ are known.
For example, see \cite[Corollary 1]{2014LS}.
Although the author could not find any clear statement, \cite[Proposition 1]{2014LS} immediately implies the following:
\begin{prop.}
	Let $w$ be a finite signed Borel measure on $\R^3$. 
	Let $f \in L^1_\xi \cap L^\I_\xi$ be real-valued and radial.
	If
	\begin{align}
		\frac{\|\widehat{w}\|_{L^\I_\xi}}{2|\BS^{d-1}|} \int_{\BR^d} \frac{|\check{f}(x)|}{|x|^{d-2}} dx < 1
		\quad \mbox{or}
		\quad \No{\frac{\wh{w}(\xi)}{|\xi|}}_{L^\I_\xi} \int_0^\I |\check{f}(r)|dr < 1
	\end{align}
	holds, then $(1+\CL_1(f,w))^{-1} \in \CB(L^2_{t,x})$.
\end{prop.}
\begin{rmk.}
	We identify a radial function $H:\BR^d \to \BR$ and $h:[0,\I) \to \BR$ such that $H(x) = h(|x|)$.
\end{rmk.}

Our result includes the Fermi gas at zero temperature because we have the following criteria:
\begin{prop.}[\cite{2023H} Proposition 1.6] \label{cor}
	Let $w$ be a finite signed Borel measure on $\BR^3$.
	Let $f(\xi) = \ch_{\{|\xi|^2 \le 1\}}$.
	If $\wh{w}$ is real-valued and
	\begin{align} \label{potential 3D}
		-\de_0 \le \wh{w}(\xi) \le \frac{\de_1}{\Br{\log|\xi|}}
	\end{align}
	for (small) absolute constants $\de_0, \de_1 > 0$,
	then $(1+\CL_1(f,w))^{-1} \in \CB(L^2_t L^2_x)$.
\end{prop.}
	
	Now we give some remarks about our main results.

\begin{rmk.}
When $f(\xi) = \ch_{\{|\xi|^2 \le 1\}}$, the scattering holds only in the focusing case by \eqref{potential 3D}.
\end{rmk.}

\begin{rmk.}
	If we include the delta measure as an interaction potential of \eqref{NLH},
	Theorem \ref{main} is optimal, meaning cubic NLS in 3D is critical in $H^\tw_x$. 
	However, it may be an interesting problem to improve the scattering norm or the function space that the solution $Q(t)$ belongs to at each time.
	In particular, the author suspects that it may be possible to replace $\FS^3$ in Theorem \ref{main} by $\CH^{\tw,3}$.
\end{rmk.}

\subsection{Summary of ideas of the proof of the main result}
The rough story of the proof is the same as that of \cite{2014LS} and \cite{2018CHP}.
First, we reduce the Cauchy problem \eqref{$f$-NLH} with $Q(0)=Q_0$ to the nonlinear equation of the density function $\rh_Q$ (see \eqref{IVP*}).
After finding a unique global solution $\rh_Q$, we restore the scattering solution $Q(t) \in C(\BR,\CH^\tw)$.
See Sect.2 for more details.
Most of our efforts are devoted to finding a unique global density function $\rh_Q$ in $L^2_t(\R,H^\tw_x)$.

One of the most essential tools in this paper is the Strichartz estimates for orthonormal functions; in particular, we essentially use the estimates proved by Bez, Hong, Lee, Nakamura and Sawano in \cite{2019BHLNS}.
This is the estimate for the free propagator $e^{it\De}$, but we need to extend their result to the general propagator $U_V(t)$ (see \eqref{LS} and \eqref{IVP*}).
To do so, according to \cite{2014LS, 2018CHP}, we use the wave operator decomposition of $U_V(t)$ (see \eqref{UVWV}).
We need to emphasize that \cite{2019BHLNS} appeared after the important previous works \cite{2014LS, 2018CHP} were written, and the estimates established in \cite{2019BHLNS} enabled us to get the result in this paper.

Another important ingredients in this paper is Christ--Kiselev type lemma in Schatten classes (see Lemma \ref{SCK}).
It is a direct corollary of the result in \cite{1970Go-Kre},
and it was obtained much earlier than Christ--Kiselev's result \cite{2001CK};
therefore, we should call it Gohberg--Kre\u{\i}n theorem.
Gohberg--Kre\u{\i}n theorem and the duality principle, first introduced in \cite{2017FS} to prove the orthonormal Strichartz estimates, is also very useful. 
The author would like to emphasize that both the duality principle and Gohberg--Kre\u{\i}n theorem are known results, but it was not noticed that combining them is useful as far as the author knows.

It is natural to find a density function $\rh_Q$ such that $w \ast \rh_Q \in L^2_t(\BR,H^\tw_x)$.
However, we would like to deal with general finite signed Borel measures on $\R^3$; hence, we need to find $\rh_Q$ in $L^2_t(\R,H^\tw_x)$.
It means that we need to estimate $\|\rh(U_V(t)_\st Q_0)\|_{L^2_t H^{1/2}_x}$ (see \eqref{IVP*}).
One of the most difficult part of this paper is to bound this term, and the difficulty comes from $H^\tw_x$.
If we replace $H^\tw_x$ by $H^n_x$ with integer $n$, then we can bound it much easier because we have the formula
$\na \rh_Q = \rh([\na,Q])$, 
and we can easily calculate $[\na,U_V(t)]$ explicitly; however, for the fractional derivatives, there does not exist this type of calculation as far as the author knows.
Hence, we ``interpolate'' the bounds of $\|\rh(U_V(t)_\st Q_0)\|_{L^2_t L^2_x}$ and $\|\rh(U_V(t)_\st Q_0)\|_{L^2_t H^1_x}$, but things do not go so straightforwardly because the bounds of these terms are nonlinear with respect to $V$.
To overcome this problem, we decompose $U_V(t)$ into wave operators and then multilinearize them.
In this argument, Gohberg--Kre\u{\i}n theorem (and duality argument) plays an essential role.

Finally, we will estimate the terms including $\ga_f$. 
In the first version of this paper, the author said ``we can bound them by simple duality arguments''.
However, it is not true. 
We will explain this difficulty here.
We often need to get estimate in the following form
\begin{align}\label{eq:retarded}
	\No{\int_0^\I dt U(t)^* (\sd^\tw V)(t)U(t) \int_0^t ds U(s)^*W(s)U(s) }_{\FS^\al} \ls \|V\|_{L^2_t L^p_x} \|\sd^\tw W\|_{L^2_t L^q_x}.
\end{align}
In the most cases, it is pretty easy to get
\begin{align}\label{eq:full int}
		\No{\int_0^\I dt U(t)^* (\sd^\tw V)(t)U(t) \int_0^\I ds U(s)^*W(s)U(s) }_{\FS^\al} \ls \|V\|_{L^2_t L^p_x} \|\sd^\tw W\|_{L^2_t L^q_x}.
\end{align}
However, the author do not know any way to conclude \eqref{eq:retarded} from \eqref{eq:full int} directly. 
To overcome this problem, we first replace $\sd^\tw$ to $\sd^j$ with $j=0,1$, and then get similar estimates.
Lastly, we interpolate them. See Sect.5 for detailed arguments.

\subsection{Organization of this paper}
This paper is organized as follows.
In Sect.2, we explain the outline of the proof of the main result.
We reduce the Cauchy problem \eqref{NLH} with $Q(0)=Q_0$ to the equation of density function $\rh_Q$.
This is the density functional method.
In Sect.3, we prepare a lot of basic tools.
It includes Christ-Kiselev type lemma, resolvent expansion of the propagator $U_V$, and so on.
In Sect.4, we prove the key Strichartz estimates in this paper.
In Sect.5, we show the global and unique existence of the density function $\rh_Q$.
Finally, in Sect.6, we restore the solution $Q(t)$ to the original Cauchy problem from its density function $\rh_Q$.

\section{Outline of the proof of the main result}
\subsection{Reduction to the equation for density functions}
Let $U_V(t,s)$ be the propagator for the following linear Schr\"{o}dinger equation with a time-dependent potential:
 \begin{align} \label{LS}
  (i \pl_t + \De -V(t,x))u = 0, \quad u(t,x) : \BR^{1+3} \to \BC, \quad V(t,x):\BR^{1+3} \to \BR,
 \end{align}
namely, for any $s \in \BR$, $u(t):= U_V(t,s)u(s)$ is a solution to \eqref{LS}.

Our strategy is the same as that of \cite{2014LS} and \cite{2018CHP}.
First, we find a solution to the nonlinear equation of density function $\rh_Q$:
\begin{align}\tag{IVP} \label{IVP}
 &\rh_Q = \rh(U_V(t)_\st Q_0) - \rh\Dk{i\int_0^t U_V(t,\ta)_\st [V(\ta),\ga_f] d\ta}, \\
 &V=w \ast \rh_Q.
\end{align}
If we find a solution $\rh_Q$ to \eqref{IVP},
then we can restore the solution to \eqref{NLH} with $Q(0)=Q_0$ by
\begin{align} \label{eq Q}
 &Q(t) := U_V(t)_\st Q_0 - i\int_0^t U_V(t,\ta)_\st [V(\ta),\ga_f] d\ta, \\
 &V := w \ast \rh_Q.
\end{align}
We can reduce the proof of our main result to finding the solution $\rh_Q$ to \eqref{IVP} because we have the following lemma:
\begin{lem.} \label{reduction}
Let $d=3$.
Let $\Br{\xi}^2 f(\xi) \in L^1_\xi \cap L^\I_\xi$, $Q_0 \in \CH^\tw$ and $V \in L^2_t(\R,H^\tw_x)$.
Then $Q(t)$ defined by \eqref{eq Q} is in $C(\BR, \CH^\tw)$.
Furthermore, $Q(t)$ scatters; that is,
there exist $Q_\pm \in \FS^3$ such that
\begin{align}
 U(-t)Q(t)U(t) \to Q_\pm \mbox{ in } \FS^3 \mbox{ as } t \to \pm \I.
\end{align}
\end{lem.}

\subsection{Set-up for the contraction mapping argument}
We transform \eqref{IVP} to a more suitable form to estimate.
Note that $U_V(t,s)$ satisfies
\begin{align}
 U_V(t,s) &= U(t-s) -i \int_s^t U(t-\ta) V(\ta) U_V(\ta,s) d\ta  \\
 &=:U(t-s) + D_V(t,s).
\end{align}
Hence, we have
\begin{align}
 &-i\int_0^t U_V(t,\ta)_\st [V(\ta),\ga_f] d\ta \\
 &\quad = -i \int_0^t (U(t-\ta)+D_V(t,\ta))[V(\ta), \ga_f] (U(\ta-t)+D_V(\ta,t))d\ta \\
 &\quad = -i \int_0^t U(t-\ta)[V(\ta), \ga_f] U(\ta-t)d\ta 
 -i \int_0^t U(t-\ta)[V(\ta), \ga_f] D_V(\ta,t)d\ta \\
 &\qquad -i \int_0^t D_V(t,\ta)[V(\ta), \ga_f] U(\ta-t)d\ta 
 -i \int_0^t D_V(t,\ta)[V(\ta), \ga_f] D_V(\ta,t)d\ta \\
 &\quad =: L_1[V](t) + N_1[V](t) + N_2[V](t) + N_3[V](t). \label{ABCD}
\end{align}
We rewrite \eqref{IVP} and obtain
\begin{align}
 \rh_Q &= \rh(U_V(t)_\st Q_0) + \rh(L_1[V])+ \rh(N_1[V])+ \rh(N_2[V])+ \rh(N_3[V]) \\
 &= \rh(U_V(t)_\st Q_0) -\CL_1[\rh_Q] + \rh(N_1[V])+ \rh(N_2[V])+ \rh(N_3[V]).
\end{align}
Since we assumed $(1+\CL_1)^{-1}\in \CB(L^2_{t,x})$, we get
\begin{align} \label{IVP*} \tag{IVP*}
 \rh_Q = (1+\CL_1)^{-1}(\rh(U_V(t)_\st Q_0)  +\rh(N_1[V])+ \rh(N_2[V])+ \rh(N_3[V])).
\end{align}
Combining Theorem \ref{reduction}, we can reduce the proof of the main result to the following theorem:
\begin{th.} \label{den main}
	Under the same assumptions as in Theorem \ref{main}, there exists a unique global solution $\rh_Q \in L^2_t(\BR,H^\tw_x)$ to \eqref{IVP*}.
\end{th.}

\section{Preliminaries}
In this section, we consider general $d$-dimensional spaces because there is no benefit to limiting our argument to three-dimensional space.

\subsection{Christ--Kiselev type lemma}
First, we give a Chirst--Kiselev type lemma in Schatten classes;
however, the following result is already obtained in \cite{1970Go-Kre} before \cite{2001CK}.
Therefore, we should call it Gohberg--Kre\u{\i}n theorem.
We obtain the following result by applying \cite[Theorem 7.2]{2003BS} (but originally by \cite[Theorem III.6.2]{1970Go-Kre}),
setting $E_t = F_t = P_{L^2((a,t);L^2_x)}$ in their notations, where $P_A$ is a projection to $A$.
Note that the integral kernel of $\BD = \CJ^{E,F}_\th \BT$ is $\th(t-\ta)K(t,\ta)$, where $\th(x)=\ch_{\{x \ge 0\}}$.

\begin{theorem}[Gohberg--Kre\u{\i}n] \label{SCK}
	Let $d \ge 1$.
	Let $-\I \leq a < b \leq \I$, $1 < p < \I$ and $\al \in (1,\I)$.
	Let $(a,b)^2 \ni (t,\ta) \mapsto K(t,\ta)\in \CB(L^2_x)$ be strongly continuous.
	Assume that we can write $\BT \in \CB(L^2_\tx)$ by
	\begin{align}
		(\BT g)(t,x) = \int_a^b K(t,\ta)g(\ta)d\ta.
	\end{align}
	If $\BT \in \FS^\al(L^2_\tx)$, then  $\BD$ defined by
	\begin{align}
		(\BD g)(t,x) = \int_a^t K(t,\ta)g(\ta)d\ta
	\end{align}
	is also in $\FS^\al(L^2_\tx)$, and 
	\begin{align}
		\|\BD\|_{\FS^\al(L^2_\tx)} \le C_p \|\BT\|_{\FS^\al(L^2_\tx)}
	\end{align}
	holds.
\end{theorem}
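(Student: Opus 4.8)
The plan is to reduce Theorem \ref{SCK} to a known abstract result from the theory of triangular operators (Volterra-type / causal operators) on Hilbert spaces, namely the Gohberg--Kre\u{\i}n theory of triangular truncation in symmetrically normed ideals. Concretely, I would introduce the Hilbert space $\CK := L^2((a,b); L^2_x) = L^2_{t,x}$ together with the increasing family of orthogonal projections $E_t := P_{L^2((a,t); L^2_x)}$ for $t \in (a,b)$, which forms a continuous chain (a resolution of the identity) from $0$ at $t = a^+$ to $1$ at $t = b^-$. The operator $\BD$ is obtained from $\BT$ by the triangular truncation along this chain: in the notation of \cite[Theorem 7.2]{2003BS} (originally \cite[Theorem III.6.2]{1970Go-Kre}), $\BD = \CJ^{E,E}_\th \BT$ with $\th = \ch_{\{x \ge 0\}}$, and the cited theorem asserts precisely that for $\al \in (1,\I)$ the map $\BT \mapsto \CJ^{E,E}_\th \BT$ is bounded on $\FS^\al(\CK)$ with a constant $C_p$ depending only on $\al$ (the classical estimate behaves like $\al^2/(\al-1)$, blowing up as $\al \to 1$, which is why the endpoint $\al = 1$ is excluded). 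So the substance of the proof is the verification that $\BD$ in the statement really coincides with this abstract triangular truncation of $\BT$.

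The key steps, in order: (1) Record that $(E_t)_{t \in (a,b)}$ is a continuous nest with the required endpoint behavior, so the hypotheses of \cite[Theorem 7.2]{2003BS} are met. (2) Show that $\BT$, given by $(\BT g)(t) = \int_a^b K(t,\ta) g(\ta)\, d\ta$, has, with respect to this nest, "integral kernel" $K(t,\ta)$ in the sense needed; here the strong continuity of $(t,\ta) \mapsto K(t,\ta) \in \CB(L^2_x)$ guarantees that the double integral is well-defined and that the Riemann-sum approximations along partitions of $(a,b)$ converge appropriately, so that one may legitimately speak of the "kernel" being multiplied by $\th(t-\ta)$. (3) Identify $\CJ^{E,E}_\th \BT$ with the operator whose kernel is $\th(t-\ta) K(t,\ta) = \ch_{\{\ta \le t\}} K(t,\ta)$, i.e.\ with $(\BD g)(t) = \int_a^t K(t,\ta) g(\ta)\, d\ta$ — this is exactly the content of the remark preceding the theorem in the excerpt. (4) Apply the cited theorem to conclude $\BD \in \FS^\al(\CK)$ with $\|\BD\|_{\FS^\al} \le C_p \|\BT\|_{\FS^\al}$.

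The main obstacle — and the only place requiring genuine care rather than citation — is step (2)/(3): making rigorous the passage from "$\BT$ is the integral operator with operator-valued kernel $K(t,\ta)$" to "$\BD$ is the triangular truncation of $\BT$ relative to the nest $(E_t)$." The abstract Gohberg--Kre\u{\i}n theorem is phrased for the triangular truncation defined via the nest and limits of sums $\sum_k (E_{t_{k+1}} - E_{t_k}) \BT E_{t_k}$ over refining partitions; one must check that for an integral operator with a strongly continuous kernel this limit exists in the strong (or weak) operator topology and is represented by the kernel $\ch_{\{\ta \le t\}} K(t,\ta)$. This is a standard but slightly delicate Fubini/approximation argument: approximate $K$ by kernels that are locally constant in $(t,\ta)$ on a grid, for which the identification $\CJ^{E,E}_\th \BT = \BD$ is immediate by direct computation, and then pass to the limit using strong continuity of $K$ together with the fact that triangular truncation is bounded on $\FS^\al$ (so the approximation is stable in the relevant norm). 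Alternatively — and this is cleanest — one can simply invoke that \cite[Theorem 7.2]{2003BS} is already stated in a kernel form that covers strongly continuous operator-valued kernels, in which case steps (2)-(3) collapse to matching notation, and the proof is essentially a one-line reduction. Either way, no new estimate is needed; everything of analytic weight is imported from \cite{1970Go-Kre, 2003BS}.
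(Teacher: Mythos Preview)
Your proposal is correct and is exactly the approach the paper takes: reduce to \cite[Theorem 7.2]{2003BS} (originally \cite[Theorem III.6.2]{1970Go-Kre}) by choosing the nest $E_t = F_t = P_{L^2((a,t);L^2_x)}$ and identifying $\BD = \CJ^{E,F}_\th \BT$ via its kernel $\th(t-\ta)K(t,\ta)$. The paper states this reduction in one sentence before the theorem and gives no further proof, so your elaboration of steps (2)--(3) is more detailed than what the paper records but entirely in the same spirit.
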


\subsection{Duality principle and its application}
In this subsection, we assume all Hilbert spaces are separable and infinite-dimensional.
Let $H, K$ be complex Hilbert spaces.
Let $\al \in [1,\I]$ and $A: H \to K$ be compact.
We define Schatten $\al$-norm by
\begin{align}
	\|A\|_{\FS^\al(H\to K)}
	:= 
	\begin{cases}
		&\K{\Tr_H(|A^*A|^{\frac{\al}{2}})}^{\frac{1}{\al}} \mbox{ if } 1 \le \al <\I,\\
		&\|A\|_{\CB(H \to K)} \mbox{ if } \al = \I.
	\end{cases}
\end{align}
Note that $\FS^\al(H \to H)=\FS^\al(H)$.
H\"{o}lder's inequality for Schatten norm is well-known: If $\al, \al_0, \al_1 \in [0,\I]$ satisfy $\frac{1}{\al}=\frac{1}{\al_0}+\frac{1}{\al_1}$, then it holds that
\begin{align} \label{Holder}
	\|A_0 A_1\|_{\FS^{\al}(H)} \le \|A_0\|_{\FS^{\al_0}(H)} \|A_1\|_{\FS^{\al_1}(H)}. 
\end{align}
Note that \eqref{Holder} implies the following:
If $\al, \al_0, \al_1 \in [0,\I]$ satisfy $\frac{1}{\al}=\frac{1}{\al_0}+\frac{1}{\al_1}$, then it follows that
\begin{align} \label{g Holder}
	\|A_0 A_1\|_{\FS^{\al}(H_0\to H_2)} \le \|A_0\|_{\FS^{\al_0}(H_1 \to H_2)} \|A_1\|_{\FS^{\al_1}(H_0 \to H_1)}. 
\end{align}
In fact, by the singular value decomposition, we can write $A_0 = U \wt{A_0}$, where $\|\wt{A_0}\|_{\FS^{\al_0}(H_1)} =  \|A_0\|_{\FS^{\al_0}(H_1 \to H_2)}$ and $U:H_1 \to H_2$ is unitary.
In the same way, $A_1 = \wt{A_1} V$, where $\|\wt{A_1}\|_{\FS^{\al_1}(H_1)} = \|A_1\|_{\FS^{\al_1}(H_0 \to H_1)}$ and $V:H_0 \to H_1$ is unitary.
Therefore, we obtain
\begin{align}
	\|A_0 A_1\|_{\FS^\al(H_0 \to H_2)}
	&= \|U \wt{A_0} \wt{A_1} V\|_{\FS^\al(H_0 \to H_2)} 
	= \|\wt{A_0} \wt{A_1}\|_{\FS^\al(H_1)} \\
	&\le \|\wt{A_0}\|_{\FS^{\al_0}(H_1)} \|\wt{A_1}\|_{\FS^{\al_1}(H_1)}
	= \|A\|_{\FS^{\al_0}(H_1 \to H_2)} \|A_1\|_{\FS^{\al_1}(H_0 \to H_1)}.
\end{align}

For any $\al \in [1, \I]$, define
\begin{align}
	&\FS^\alpha := \FS^\alpha(L^2_x \to L^2_x),
	\quad \FS^\al_{t,x} := \FS^\al(L^2_{t,x} \to L^2_\tx), \\
	&\FS^\al_{x \to (t,x)} := \FS^\al(L^2_x \to L^2_{t,x}),
	\quad \FS^\alpha_{(t,x) \to x} := \FS^\al(L^2_{t,x} \to L^2_x).
\end{align}
Let $I \subset \R$ be an interval.
Let $A(t) \in \CB(L^2_x)$ for all $t \in I$ and $\sup_{t \in I} \|A(t)\|_{\CB} < \I$.
Assume that $I \ni t \mapsto A(t) \in \CB(L^2_x)$ be strongly continuous.
We define $A^\ocircle : L^2_x \to C_t(I,L^2_x)$ and $A^\oast : L^1_t(I,L^2_x) \to L^2_x$ by
\begin{align}
	&(A^\ocircle u_0)(t,x) := (A(t)u_0)(x), \\
	&(A^\oast f)(x) := \int_I A(\ta)^* f(\ta,x) d\ta.
\end{align}
Note that $A^\ocircle$ and $A^\oast$ are formally adjoint to each other.
The following lemma is very important for our analysis, but it is essentially the same thing as \cite[Lemma 3]{2017FS}.
\begin{lemma}[Duality principle] \label{DP}
	Let $p,q,\al \in [1,\I]$.
	The following are equivalent:
	
	(1) For any $\ga \in \FS^\al$, 
	\begin{align} \label{SS}
		\|\rh(A(t)_\st \ga)\|_{L^p_t(I,L^q_x)} \le C \|\ga\|_{\FS^\al}.
	\end{align}
	
	(2) For any $f \in L^{2p'}_t(I,L^{2q'}_x)$, 
	\begin{align} \label{HSS}
		\|f A^\oc\|_{\FS^{2\al'}_{x \to (\tx)}} \le C'\|f\|_{L^{2p'}_t(I,L^{2q'}_x)}.
	\end{align}
	
	(3) For any $f \in L^{2p'}_t(I,L^{2q'}_x)$, 
	\begin{align} \label{HSS}
		\|A^\os f\|_{\FS^{2\al'}_{(\tx) \to x}} \le C''\|f\|_{L^{2p'}_t(I,L^{2q'}_x)}.
	\end{align}
	Moreover, $\sqrt{C}$, $C'$ and $C''$ coincide.
\end{lemma}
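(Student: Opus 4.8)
\textbf{Proof proposal for the duality principle (Lemma \ref{DP}).}
The plan is to prove the circle of implications $(1)\Rightarrow(2)\Rightarrow(3)\Rightarrow(1)$ by exhibiting, at each step, the appropriate Hilbert--Schmidt-type bilinear pairing and using the identification $\FS^\al(H)^* = \FS^{\al'}(H)$ together with the factorization property \eqref{g Holder}. The central observation is that the density function map and the operator $A^\oc$ are related by a trace pairing: for $\ga\in\FS^\al$ and $f\in L^{2p'}_t(I,L^{2q'}_x)$, one has the formal identity
\begin{align}
	\int_I\int_{\R^d} f(t,x)^2\, \rh(A(t)\ga A(t)^*)(x)\,dx\,dt
	= \Tr\K{(fA^\oc)\,\ga\,(fA^\oc)^*},
\end{align}
which is verified by writing both sides in terms of the integral kernel of $\ga$. (One should first establish this for $\ga$ a finite-rank positive operator $\ket{u}\bra{u}$, where the right side is $\No{fA^\oc u}_{L^2_\tx}^2 = \int\int |f(t,x)|^2|(A(t)u)(x)|^2$, and then pass to general $\ga\in\FS^\al$ by density and positivity/polarization.)

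First I would show $(1)\Leftrightarrow(2)$. Given \eqref{SS}, for positive $\ga\in\FS^\al$ the displayed trace identity and Hölder's inequality on the $L^{p/2}_t(I,L^{q/2}_x)$--$L^{(p/2)'}_t(I,L^{(q/2)'}_x)$ pairing give
\begin{align}
	\Tr\K{(fA^\oc)\ga(fA^\oc)^*} \le \No{\rh(A_\st\ga)}_{L^p_tL^q_x}\No{f^2}_{L^{(p/2)'}_tL^{(q/2)'}_x}
	\le C\No{\ga}_{\FS^\al}\No{f}_{L^{2p'}_tL^{2q'}_x}^2;
\end{align}
taking the supremum over positive $\ga$ with $\No{\ga}_{\FS^\al}\le 1$ and invoking the duality $\FS^{\al'}=(\FS^\al)^*$ identifies $\No{(fA^\oc)^*(fA^\oc)}_{\FS^{\al'}}\le C\No{f}^2$, i.e. $\No{fA^\oc}_{\FS^{2\al'}_{x\to(\tx)}}^2\le C\No f^2$, which is \eqref{HSS} with $C'=\sqrt C$. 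Conversely, given \eqref{HSS}, one runs the same pairing in reverse: for fixed $\ga$, the functional $f\mapsto\int\int f^2\,\rh(A_\st\ga)$ is bounded on $L^{2p'}_tL^{2q'}_x$-square by $C'^2\No\ga_{\FS^\al}$ (using $(fA^\oc)^*(fA^\oc)$ against $\ga\in\FS^\al$ and Hölder in Schatten classes), and testing against $f^2$ ranging over the unit ball recovers \eqref{SS} with $C=C'^2$. The equivalence $(2)\Leftrightarrow(3)$ is then immediate since $A^\os f = (fA^\oc)^*$ (up to the identification of $A^\oast$ as the formal adjoint of $A^\ocircle$ noted in the text), and taking adjoints preserves Schatten norms: $\No{A^\os f}_{\FS^{2\al'}_{(\tx)\to x}} = \No{(fA^\oc)^*}_{\FS^{2\al'}_{(\tx)\to x}} = \No{fA^\oc}_{\FS^{2\al'}_{x\to(\tx)}}$, whence $C''=C'$.

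The step I expect to require the most care is the rigorous justification of the trace identity and the attendant density argument: $\ga\in\FS^\al$ for $\al>2$ need not be Hilbert--Schmidt, so $fA^\oc$ composed with $\ga$ must be handled via the factorization $\ga = |\ga|^{1/2}U|\ga|^{1/2}$ and \eqref{g Holder}, and the interchange of the $(t,x)$-integration with the trace must be licensed by strong continuity of $t\mapsto A(t)$ together with $\sup_t\No{A(t)}_{\CB}<\I$. One also has to be slightly careful that $f^2\in L^{(p/2)'}_tL^{(q/2)'}_x$ exactly when $f\in L^{2p'}_tL^{2q'}_x$, and that the endpoint cases $p,q$ or $\al$ equal to $1$ or $\I$ (where $\FS^\I=\CB$ and the "trace" pairing degenerates to an operator-norm duality) are covered — there the argument reduces to the standard $T T^*$ duality between $\CB(L^2_x\to L^2_\tx)$-boundedness of $fA^\oc$ and the bilinear estimate, so no new idea is needed. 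Modulo these technical points, the proof is a direct transcription of \cite[Lemma 3]{2017FS} to the present notation.
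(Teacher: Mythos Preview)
Your proposal is correct and follows essentially the same approach as the paper: both arguments rest on the identity $\|fA^\oc\|_{\FS^{2\al'}_{x\to(\tx)}}^2 = \|(fA^\oc)^*(fA^\oc)\|_{\FS^{\al'}} = \sup_{\|\ga\|_{\FS^\al}\le 1}\bigl|\int_I\Tr[A(t)\ga A(t)^*|f(t)|^2]\,dt\bigr|$ combined with H\"older in $(t,x)$, and then obtain $(2)\Leftrightarrow(3)$ from the adjoint relation. One small correction: strictly speaking $(fA^\oc)^* = A^\os\bar f$ rather than $A^\os f$, but since the norms in (2) and (3) depend only on $|f|$ this does not affect the conclusion (the paper makes the same identification).
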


\begin{proof}
	Assume (1). Then we have
	\begin{align}
		\|f A^\oc\|_\Sxtx^2 &= \|A^\os |f|^2 A^\oc\|_{\FS^{\al'}} \\
		&= \sup \{ |\Tr[\ga A^\os |f|^2 A^\oc ]| : \|\ga\|_{\FS^\al} \le 1\} \\
		&= \sup \left \{ \abs{\int_I \Tr[A(t) \ga A(t)^* |f(t)|^2]dt  } : \|\ga\|_{\FS^\al} \le 1 \right \} \\
		&\le \sup \left \{ \|\rh(A(t)_\st \ga)\|_{L^p_t(I,L^q_x)} \|f\|_{L^{2p'}_t(I,L^{2q'}_x)}^2  : \|\ga\|_{\FS^\al} \le 1 \right \} \\
		&\le C \|f\|_{L^{2p'}_t(I,L^{2q'}_x)}^2.
	\end{align}
	Therefore, we obtain 
	\begin{align}
		\|f A^\oc\|_\Sxtx \le \sqrt{C} \|f\|_{L^{2p'}_t(I,L^{2q'}_x)}.
	\end{align}
	We can prove that (2) yields $C \le (C')^2$ similarly.
	Since
	\begin{align}
		\|f A^\oc\|_{\Sxtx} = \|A^\os f\|_{\Stxx},
	\end{align}
	we completed the proof.
\end{proof}

The following lemma is sometimes useful:
\begin{lemma} \label{asym}
	Let $I \subset \R$ be an interval.
	Assume that
	\begin{align}
		\|\rh(A_n(t)_\st \ga)\|_{L^{p_n}_t(I,L^{q_n}_x)} \le C_n \|\ga\|_{\FS^{\al_n}}
	\end{align}
	for $n=0,1$.
	Then we have
	\begin{align}
		\|\rh(A_0(t) \ga A_1(t)^*)\|_{L^p_t(I,L^q_x)} \le \sqrt{C_0 C_1} \|\ga\|_{\FS^\al},
	\end{align}
	where
	\begin{align}
		\frac{1}{X} = \frac{1}{2} \K{\frac{1}{X_0}+ \frac{1}{X_1}}, \quad X = p,q,\al. 
	\end{align}
\end{lemma}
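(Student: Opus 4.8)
The goal is to prove Lemma \ref{asym}, which is a bilinear interpolation-type statement: from one-parameter-family Strichartz bounds for $A_0$ and $A_1$ separately, deduce the ``mixed'' bound for the bilinear object $\ga \mapsto \rh(A_0(t)\ga A_1(t)^*)$ at the harmonic-mean exponents. The natural route is to pass through the dual formulation provided by Lemma \ref{DP}, where the Schatten bounds linearize nicely, and then apply complex interpolation (Stein's interpolation theorem for analytic families of operators, or simply the three-lines lemma on a suitably constructed analytic family).

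First I would apply Lemma \ref{DP} to each hypothesis: the bound for $A_n$ with exponents $(p_n,q_n,\al_n)$ is equivalent to
\begin{align}
	\|f A_n^\oc\|_{\FS^{2\al_n'}(L^2_x \to L^2_\tx)} \le \sqrt{C_n}\, \|f\|_{L^{2p_n'}_t(I,L^{2q_n'}_x)}
\end{align}
for all $f$, and likewise to the adjoint statement $\|A_n^\os f\|_{\FS^{2\al_n'}(L^2_\tx \to L^2_x)} \le \sqrt{C_n}\,\|f\|_{L^{2p_n'}_t(I,L^{2q_n'}_x)}$. Now observe that for the bilinear quantity one has, exactly as in the proof of Lemma \ref{DP},
\begin{align}
	\|\rh(A_0(t)\ga A_1(t)^*)\|_{L^p_t(I,L^q_x)} = \sup\left\{ \left| \int_I \Tr[A_1(t)^* f(t) A_0(t) \ga]\, dt \right| : \|f\|_{L^{2p'}_t(I,L^{2q'}_x)} \le 1,\ \|\ga\|_{\FS^{\al'}} \le 1 \right\},
\end{align}
wait — more cleanly, duality against $\FS^{\al'}$ in the $\ga$-variable and against $L^{p'}_tL^{q'}_x$ in the density reduces the claim to the operator bound $\|A_0^\os f A_1^\oc\|_{\FS^{2\al'}(L^2_x \to L^2_x)} \le \sqrt{C_0 C_1}\,\|f\|_{L^{2p'}_t(I,L^{2q'}_x)}$, by Hölder for Schatten norms \eqref{g Holder} in the composition $A_0^\os \circ (f\cdot) \circ A_1^\oc$ once the two factors are controlled in $\FS^{2\al_0'}$ and $\FS^{2\al_1'}$ respectively. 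So the real content is: interpolate the pair of endpoint bounds for $f \mapsto A_0^\os f A_1^\oc$ (or rather the two separate one-sided bounds) to land at the $1/2$-average exponents.

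The interpolation step is carried out by constructing an analytic family. Fix $f$ and decompose it via $f = |f| \cdot \omega$ and raise $|f|$ to a complex power: set $F_z := |f|^{\theta(z)} \omega$ with $\theta(z)$ a linear function of $z$ on the strip $0 \le \Re z \le 1$ interpolating between the exponents dictated by $L^{2p_0'}_tL^{2q_0'}_x$ and $L^{2p_1'}_tL^{2q_1'}_x$ (treating $t$ and $x$ exponents in lockstep, which is legitimate since we only need the diagonal of the bi-parameter interpolation). Then $z \mapsto A_0^\os F_z A_1^\oc$ is an analytic family of operators, bounded on $\Re z = 0$ in $\FS^{2\al_0'}$ by $\sqrt{C_0 C_1}\,\|f\|^{\cdots}$ after pairing with a similarly rotated test operator $\ga_z = |\ga|^{\cdots}U$, and bounded on $\Re z = 1$ in $\FS^{2\al_1'}$; Stein–Weiss / Calderón complex interpolation of Schatten classes (together with the fact that $\FS^\al$ interpolates to $\FS^\al$ with the expected exponent relation $\frac{1}{2\al'} = \frac{1}{2}(\frac{1}{2\al_0'} + \frac{1}{2\al_1'})$, equivalently $\frac{1}{\al} = \frac12(\frac{1}{\al_0}+\frac{1}{\al_1})$) then yields the bound at $z = 1/2$, which is precisely the claimed estimate after undoing the duality. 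I would cite \cite{2005Simon} for the interpolation property of Schatten classes.

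\textbf{Main obstacle.} The one genuinely delicate point is bookkeeping the \emph{bi-parameter} nature of the mixed $L^{2p'}_t L^{2q'}_x$ norm under complex interpolation: $L^{p_0}_tL^{q_0}_x$ and $L^{p_1}_tL^{q_1}_x$ interpolate to $L^p_tL^q_x$ with $p,q$ the harmonic means, but one must make sure the analytic-family trick applies simultaneously in the $t$ and $x$ exponents and, more importantly, that the target Schatten exponent moves correctly in tandem (the operator $f\mapsto A_0^\os f A_1^\oc$ changes its target Schatten class with $z$, not just its operator norm). The cleanest way to handle this is to phrase everything as in the proof of Lemma \ref{DP}: test against $\ga$ and against a density-side function $g \in L^{p'}_tL^{q'}_x$ simultaneously, reducing to a scalar trilinear form $\int_I \Tr[A_1^*|g|^{1/2}\cdots]$, build the analytic family in all three of $f$(or $g$), $\ga$, and the exponents at once, and invoke the three-lines lemma for the resulting scalar analytic function. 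This sidesteps having to interpolate operators between different Schatten classes directly. Everything else — the Hölder step \eqref{g Holder} and the two applications of Lemma \ref{DP} — is routine.
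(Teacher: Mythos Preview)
Your approach diverges from the paper's, and the detour through complex interpolation is both unnecessary and, as written, incomplete.

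The paper's proof is a one-liner once you spot the right trick: factorize the test function pointwise. Given $f\in L^{p'}_t(I,L^{q'}_x)$, the harmonic-mean relations give $\tfrac{1}{p'}=\tfrac{1}{2p_0'}+\tfrac{1}{2p_1'}$ and $\tfrac{1}{q'}=\tfrac{1}{2q_0'}+\tfrac{1}{2q_1'}$, so one can write $f=f_0 f_1$ with $\|f_0\|_{L^{2p_0'}_tL^{2q_0'}_x}\|f_1\|_{L^{2p_1'}_tL^{2q_1'}_x}=\|f\|_{L^{p'}_tL^{q'}_x}$. Then
\[
A_1^\os f A_0^\oc = (A_1^\os f_1)(f_0 A_0^\oc),
\]
and Schatten--H\"older \eqref{g Holder} (with $\tfrac{1}{\al'}=\tfrac{1}{2\al_0'}+\tfrac{1}{2\al_1'}$) together with two applications of Lemma~\ref{DP} finishes the proof immediately. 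No analytic families, no three-lines lemma, and the ``main obstacle'' you describe simply does not arise.

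Your interpolation scheme has a genuine gap at the endpoints. At $\Re z=0$ (and likewise at $\Re z=1$) the scalar form $\Tr[\ga_z\, A_1^\os F_z\, A_0^\oc]$ still contains \emph{both} $A_0$ and $A_1$; neither hypothesis alone bounds it. To control it you would have to split $F_z$ and invoke both one-sided Schatten bounds via H\"older --- but that is precisely the factorization argument above, after which the interpolation machinery contributes nothing. In other words, there is no pair of ``pure'' endpoint estimates between which to interpolate; the result is bilinear in $(A_0,A_1)$, not an interpolant between the $n=0$ and $n=1$ hypotheses.

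Two minor corrections: the dual operator bound should read $\|A_1^\os f A_0^\oc\|_{\FS^{\al'}}\le\sqrt{C_0C_1}\,\|f\|_{L^{p'}_tL^{q'}_x}$ (your factors of $2$ in $\FS^{2\al'}$ and $L^{2p'}L^{2q'}$ belong to the one-sided statement of Lemma~\ref{DP}, not here); and the composition is $A_1^\os\circ(f\cdot)\circ A_0^\oc$ in the order the paper uses, though this is cosmetic.
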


\begin{proof}
	Let $f \in L^{p'}_t(I,L^{q'}_x)$. 
	$f$ can be decomposed as $f = f_0 f_1$,
	where
	\begin{align}
		\|f\|_{L^{p'}_t(I,L^{q'}_x)} = \|f_0\|_{L^{2p_0'}_t(I,L^{2q_0'}_x)} \|f_1\|_{L^{2p_1'}_t(I,L^{2q_1'}_x)}.
	\end{align}
	Then we have
	\begin{align}
		\abs{\int_I \int_{\R^d} \rh(A_0(t)\ga A_1(t)^*)(x) f(t,x) dx dt}
		&\leq \abs{\int_I \Tr (A_0(t) \ga A_1(t)^* f(t,x)) dt }  \\
		&\leq \abs{\int_I \Tr (A_1(t)^* f(t,x)A_0(t) \ga ) dt} \\
		&\leq \No{A_1^\os f A_0^\oc }_{\FS^{\al'}} \|\ga\|_{\FS^\al} \\
		&\leq  \|\ga\|_{\Sa} \|A_1^\os f_1\|_{\FS^{2\al_1'}_{(t,x) \to x}} \|f_0 A_0^\oc\|_{\FS^{2\al_0'}_{x\to(t,x)}}\\
		&\leq \sqrt{C_0C_1}\|\ga\|_\Sa \|f\|_{L^{p'}_tL^{q'}_x}.
	\end{align}
\end{proof}

\subsection{Resolvent expansion of the propagators}
If $\si(\mu,\nu):= \frac{2}{\mu}+\frac{d}{\nu}=2$, $\mu < \I$ and $V \in L^\mu_t(\R, L^\nu_x)$, there exists a unitary propagator $U_V(t)$.
Namely, $U_V$ satisfies
\begin{align} \label{Duh1}
	U_V(t) = U(t) - i \int_0^t U(t-\ta)V(\ta)U_V(\ta)d\ta.
\end{align}
Let $V_0 := |V|^{1/2}$ and $V = V_0V_1$.
Multiplication with $V_0$ yields
\begin{align}\label{Duh2}
	V_0(t) U_V(t) &= V_0(t) U(t) - i \int_0^t V_0(t)U(t-\ta)V_1(\ta)V_0(\ta)U_V(\ta)d\ta.
\end{align}
Define $\CD(V_0, V_1)$ by 
\begin{align} \label{def of D}
	(\CD(V_0, V_1)f)(t,x) := -i\int_0^t V_0(t)U(t-\ta)(V_1(\ta)f(\ta))d\ta.
\end{align}
Then we can write \eqref{Duh1} and \eqref{Duh2} as
\begin{align} 
	U_V^\ocircle &= U^\ocircle + \CD(1, V_1)V_0 U_V^\ocircle,  \label{reDuh1}\\
	V_0 U_V^\ocircle &= V_0 U^\ocircle + \CD(V_0, V_1)V_0 U_V^\ocircle \nonumber. 
\end{align}
Hence, we have
\begin{align}
	(1 - \CD(V_0, V_1))V_0 U_V^\oc = V_0 U^\oc.
\end{align}
If $\CR(V_0, V_1) := (1-\CD(V_0, V_1))^{-1} \in B(L^2_{t,x})$, we have
\begin{align} \label{V0UV}
	V_0 U_V^\oc = \CR(V_0, V_1) V_0 U^\oc .
\end{align}
Therefore, \eqref{reDuh1} and \eqref{V0UV} imply
\begin{align} \label{UVexp}
	U_V^\oc = U^\oc + \CD(1, V_1) \CR(V_0, V_1) V_0 U^\oc.
\end{align}

The following lemma justifies \eqref{UVexp}.
\begin{lemma} \label{Resol est}
	Let $d \geq 1$.
	Let $\mu \in [1,\I)$ and $\nu \in [1,\I]$ satisfy $\si(\mu,\nu)=2$.
	Then there exists a monotone increasing function $\ph:[0,\I) \to [0,\I)$
	such that for any $V \in L^\mu_t (\R,L^\nu_x)$
	\begin{align}
		\|\CR(V_0, V_1)\|_{\CB(L^2_\tx)}
		\leq \ph(\|V\|_{L^\mu_t L^\nu_x})
	\end{align}
	holds, where $V_0 := |V|^{1/2}$ and $V = V_0 V_1$.
\end{lemma}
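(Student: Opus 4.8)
The plan is to show that $\CD(V_0,V_1)$ is a Volterra-type operator on $L^2_{t,x}$ whose norm is controlled by a power of $\|V\|_{L^\mu_t L^\nu_x}$, and then invert $1-\CD(V_0,V_1)$ by a Neumann series after a standard time-partition trick. First I would observe that the \emph{full} (non-truncated) operator
\begin{align}
	(\CE(V_0,V_1)f)(t,x) := -i \int_\R V_0(t) U(t-\ta)(V_1(\ta)f(\ta)) d\ta
\end{align}
is bounded on $L^2_{t,x}$ with norm $\le C\|V_0\|_{L^{2\mu}_t L^{2\nu}_x}\|V_1\|_{L^{2\mu}_t L^{2\nu}_x} = C\|V\|_{L^\mu_t L^\nu_x}$: this is exactly the dual (or $TT^*$) form of the inhomogeneous Strichartz estimate for $e^{it\De}$, since $\si(\mu,\nu)=2$ means $(\mu,\nu)$ corresponds to the ``double'' of an admissible Strichartz pair. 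One writes $\CE(V_0,V_1) = V_0 \cdot (TT^*)\cdot V_1$ where $T = U^\oc$, so $\|\CE\|_{\CB(L^2_{t,x})} \le \|V_0 U^\oc\|_{L^{2\mu'}_tL^{2\nu'}_x\to L^2_{t,x}}\cdot\|V_1 U^\oc\|_{\cdots}$ and each factor is a Strichartz estimate; here it is cleanest to invoke Hölder together with the $L^2_x$-based inhomogeneous Strichartz estimate directly. Then Theorem \ref{SCK} (Gohberg--Kre\u{\i}n), applied with $K(t,\ta) = -iV_0(t)U(t-\ta)V_1(\ta)$ and $\al = 2$ — or more simply the classical Christ--Kiselev lemma, since here we only need $\CB(L^2_{t,x})$ boundedness — promotes the bound on $\CE$ to a bound on the causal truncation $\CD(V_0,V_1)$:
\begin{align}
	\|\CD(V_0,V_1)\|_{\CB(L^2_{t,x})} \le C \|V\|_{L^\mu_t L^\nu_x}.
\end{align}

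Having this, invertibility of $1-\CD(V_0,V_1)$ on $L^2_{t,x}$ is immediate when $\|V\|_{L^\mu_t L^\nu_x}$ is small (Neumann series), but we need it for \emph{all} $V \in L^\mu_t(\R,L^\nu_x)$. The standard device is to split $\R = \bigcup_{j=1}^M I_j$ into finitely many consecutive intervals on each of which $\|V\|_{L^\mu_t(I_j,L^\nu_x)} < \de$ for $\de$ the smallness threshold from the Neumann series; since $\mu < \I$, absolute continuity of the integral guarantees such a finite partition exists with $M = M(\|V\|_{L^\mu_t L^\nu_x})$ controlled by a monotone function of $\|V\|_{L^\mu_t L^\nu_x}$. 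Because $\CD$ is causal (the $\ta$-integral runs only up to $t$), with respect to the induced block decomposition $L^2_{t,x} = \bigoplus_j L^2_t(I_j,L^2_x)$ the operator $\CD(V_0,V_1)$ is block lower-triangular, with diagonal blocks $\CD_j$ of norm $<1$ (they are the causal truncations associated to $V|_{I_j}$, so the same Christ--Kiselev argument applies on $I_j$). A finite block-triangular operator with invertible diagonal blocks is invertible, and one gets an explicit bound on $\|(1-\CD)^{-1}\|$ purely in terms of $M$ and the off-diagonal norms — themselves $\le C\|V\|_{L^\mu_t L^\nu_x}$ — hence in terms of a monotone function $\ph$ of $\|V\|_{L^\mu_t L^\nu_x}$. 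Taking this $\ph$ proves the lemma.

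The main obstacle, or rather the only genuinely delicate point, is making the block-triangular inversion bound \emph{quantitative and monotone} in $\|V\|_{L^\mu_t L^\nu_x}$ alone: the number of intervals $M$ in the partition depends on $V$ through more than just its norm in principle, but one fixes this by choosing the partition points greedily (let $t_0 = -\I$, and $t_{k+1} = \sup\{t : \|V\|_{L^\mu_t((t_k,t),L^\nu_x)}^\mu \le \de^\mu\}$), which forces $\|V\|_{L^\mu_t((t_k,t_{k+1}),L^\nu_x)}^\mu \ge \de^\mu$ on all but the last interval, hence $M \le \de^{-\mu}\|V\|_{L^\mu_t L^\nu_x}^\mu + 1$. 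Everything downstream — the Neumann bound $\|\CD_j^{-1}\| \le (1-\de)^{-1}$ on each diagonal block and the finite geometric-type sum expressing $(1-\CD)^{-1}$ in terms of the $\CD_j^{-1}$ and the strictly-lower blocks — then yields a bound of the form $\ph(r) = (1-\de)^{-M(r)}(1 + Cr)^{M(r)}$ with $M(r) = \lfloor \de^{-\mu} r^\mu\rfloor + 1$, which is manifestly monotone increasing in $r = \|V\|_{L^\mu_t L^\nu_x}$. I would also remark that the strong continuity hypothesis needed to invoke Theorem \ref{SCK} is harmless: one proves the estimate first for $V \in C_c^\infty$, where $(t,\ta)\mapsto V_0(t)U(t-\ta)V_1(\ta)$ is strongly continuous, and then extends to general $V$ by density in $L^\mu_t L^\nu_x$, the bound being uniform.
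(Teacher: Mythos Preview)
Your overall strategy---bound $\|\CD(V_0,V_1)\|_{\CB(L^2_{t,x})}$ by $C\|V\|_{L^\mu_t L^\nu_x}$, invert $1-\CD$ by Neumann series when $\|V\|$ is small, and for general $V$ partition the time axis into finitely many subintervals on which $V$ is small and exploit causality to iterate---is exactly the paper's approach; your block-lower-triangular inversion is just the abstract rephrasing of the paper's explicit forward solve on successive intervals, and your greedy choice of partition points (yielding $M \le \de^{-\mu}\|V\|_{L^\mu_t L^\nu_x}^\mu + 1$) matches the paper's count.

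There is, however, one real slip in how you pass from $\CE$ to $\CD$. Neither of the two devices you invoke applies as stated. Theorem~\ref{SCK} with $\al=2$ requires $\CE\in\FS^2(L^2_{t,x})$, i.e.\ Hilbert--Schmidt, but $\CE=V_0 U^\oc U^\os V_1$ is only bounded: its integral kernel carries the free Schr\"odinger kernel, whose modulus squared $\sim|t-\ta|^{-d}$ is not locally integrable on the diagonal, so the $\FS^2$ norm diverges even for $V\in C_c^\infty$. And the classical Christ--Kiselev lemma needs the source exponent strictly smaller than the target exponent, so it does not cover the endpoint $L^2_{t,x}\to L^2_{t,x}$. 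The fix is immediate and costs nothing: either quote the \emph{retarded} inhomogeneous Strichartz estimate directly---this is what the paper does, obtaining $\|\CD(V_0,V_1)\|_{\CB(L^2_{t,x})}\le C_0\|V\|_{L^\mu_t L^\nu_x}$ in one line---or apply Christ--Kiselev one level earlier, to the un-sandwiched map $F\mapsto\int_\R U(\cdot-\ta)F(\ta)\,d\ta$ from $L^{p'}_tL^{q'}_x$ to $L^p_tL^q_x$, where your own exponent computation already gives $p'<2<p$, and only afterwards multiply by $V_0$, $V_1$ via H\"older. With this correction the rest of your argument goes through and reproduces the paper's bound.
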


\begin{remark}
	In this paper, we denote monotone increasing functions by $\ph$.
	We use the same symbol $\ph$ for different monotone increasing functions. 
	This is like the constant $C$.
	We do not distinguish a monotone increasing function from another one.
\end{remark}

\begin{proof}
	It follows from the standard Strichartz estimates that 
	\begin{align}
		\| \CD(V_0,V_1) \|_{\CB(L^2_{t,x})} \leq C_0 \|V\|_{L^\mu_tL^\nu_x}.
	\end{align}
	Hence, if $\|V\|_{L^p_tL^q_x} \leq 1/(2C_0) =: \de$,
	then we have
	\begin{align}
		&\| \CD(V_0,V_1) \|_{\CB(L^2_{t,x})} \leq \frac{1}{2}, \\
		&\|\CR(V_0,V_1)\|_{\CB(L^2_{t,x})} = \|(1-\CD(V_0,V_1))^{-1}\|_{\CB(L^2_{t,x})} \leq 2.
	\end{align}
	Thus, we can assume that $\|V\|_{L^\mu_t L^\nu_x} \geq \de$.
	There exists $0=T_0 <T_1 < \cdots < T_N<T_{N+1}=\I$
	such that
	\begin{align}\label{V condi}
		&\frac{\de}{2} < \|V\|_{L^\mu_t(I_k, L^\nu_x)} < \de \mbox{ for } k = 0,\dots,N-1, \\
	    &\|V\|_{L^\mu_t(I_N, L^\nu_x)} < \de,
	\end{align}
	where $I_k = (T_k, T_{k+1})$.
	Therefore, we can solve the equation
	\begin{align} \label{u-f Eq}
		(1-\CD(V_0,V_1))u = f 
	\end{align}
	uniquely on $I_0$.
	We denote this solution by $u_0 \in L^2_t(I_0,L^2_x)$.
	Assume that there exists a unique solution on $[0,T_k] = I_0\cup \dots \cup I_{k-1}$,
	denoted by $u_{k-1} \in L^2_t([0,T_k],L^2_x)$.
	We consider the equation of $u$ on $I_k = [T_k,T_{k+1}]$:
	\begin{align} \label{u Eq}
		u(t) +i\int_{T_k}^t V_0(t)U(t-\ta)V_1(\ta)u(\ta)d\ta
		= f(t) - i\int_0^{T_k} V_0(t)U(t-\ta)V_1(\ta)u_{k-1}(\ta) d\ta.
	\end{align}
	This is solvable; that is, a unique solution exists $u_k \in L^2_t(I_k,L^2_x)$ to \eqref{u Eq}.
	Let $u_k(t)=u_{k-1}(t)$ for $t \in [0,T_k]$.
	Then $u_k \in L^2_t([0,T_{k+1}], L^2_x)$ is the solution to \eqref{u-f Eq}.
	We have a unique solution to \eqref{u-f Eq} on $u \in L^2_t([0,\I),L^2_x)$ inductively.
	We can similarly extend this solution to $u\in L^2_t(\R, L^2_x)$.
	
	Now we estimate the operator norm of $\CR(V_0, V_1)$.
	We have
	\begin{align}
		\|\CR(V_0,V_1)f\|_{L^2_{t,x}} &= \|u\|_{L^2_t(\R,L^2_x)} \\
		&\leq \|u\|_{L^2_t([0,\I),L^2_x)} + \|u\|_{L^2_t((-\I,0],L^2_x)}.
	\end{align}
	By \eqref{u Eq}, we have
	\begin{align} \label{u_k ineq}
		\|u_k\|_{L^2_t(I_k,L^2_x)}
		\ls \|f\|_{L^2_{t,x}} + C_0\|V\|_{L^\mu_t L^\nu_x} \|u_{k-1}\|_{L^2_t([0,T_k],L^2_x)};
	\end{align}
	hence we obtain
	\begin{align}
		\|u_k\|_{L^2_t([0,T_{k+1}],L^2_x)}
		\ls \|f\|_{L^2_{t,x}} + \Br{C_0\|V\|_{L^\mu_t L^\nu_x}}\|u_{k-1}\|_{L^2_t([0,T_k],L^2_x)}.
	\end{align}
	Hence it follows from \eqref{u_k ineq} that
	\begin{align}
		\|u\|_{L^2_t([0,\I),L^2_x)} \ls \bra{C_0 \|V\|_{L^\mu_tL^\nu_x}}^N \|f\|_{L^2_{t,x}}.
	\end{align}
	Since
	\begin{align}
		N \K{\frac{\de}{2}}^\mu \leq \|V\|_{L^\mu_t(\R,L^\nu_x)}^\mu,
	\end{align}
	we have
	\begin{align}
		\|u\|_{L^2_t([0,\I),L^2_x)} 
		\lesssim \bra{C_0\|V\|_{L^\mu_tL^\nu_x}}^{(2/\de)^{\mu}\|V\|_{L^\mu_tL^\nu_x}^\mu} \|f\|_{L^2_{t,x}}.
	\end{align}
	We can bound $\|u\|_{L^2((-\I,0],L^2_x)}$ similarly.
	From the above, we have
	\begin{align}
		\|\CR(V_0,V_1)\|_{\CB(L^2_{t,x})}
		\ls \bra{C_0\|V\|_{L^\mu_tL^\nu_x}}^{C\|V\|_{L^\mu_tL^\nu_x}^\mu} .
	\end{align}
\end{proof}

\subsection{Propagators with time-dependent potentials}
Let $p,q \in (1,\I)$ and $\al \in [1,\I]$.
We call $(p,q,\al)$ admissible when $\frac{1}{\al} \ge \frac{1}{dp} + \frac{1}{q}$ and $\al < p$ hold.
The following result is the most essential tool in this paper:
\begin{th.}[\cite{2014FLLS}Theorem 1; \cite{2017FS}Theorem 8; \cite{2019BHLNS}Theorem 1.5] \label{Ostri}
	Let $d \ge 1$.
	Let $(p,q,\al)$ be admissible and $\si(p,q)=\frac{2}{p}+\frac{d}{q}=d-2s$ for $s \in (0,d/2)$.
	Then it holds that
	\begin{align}
		\|\rh(U(t)_\st Q_0)\|_{L^p_t L^q_x} \ls \|\sd^s_\st Q_0\|_{\FS^\al}.
	\end{align}
\end{th.}

By Theorems \ref{SCK}, \ref{Ostri} and Lemma \ref{DP}, we obtain the following corollary.
\begin{corollary} \label{DS}
	Let $d \geq 1$.
	Let $(p_n,q_n,\al_n)$ be admissible and $\si(p_n,q_n)=d$ for $n = 0,1$.
	Define
	\begin{align}
		\CD(g_0,g_1) = \int_0^t g_0(t)U(t-\ta)g_1(\ta)d\ta.
	\end{align}
	Then we have
	\begin{align}
		\|\CD(g_0,g_1)\|_{\FS^{\al'}_{t,x}} \ls \prod_{n=0,1} \|g_n\|_{L^{2p_n'}_t L^{2q_n'}_x},
	\end{align}
	where $\frac{1}{\al'} = \frac{1}{2\al_0'} + \frac{1}{2\al_1'}$.
\end{corollary}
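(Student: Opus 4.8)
The plan is to combine the duality principle (Lemma~\ref{DP}), the Schatten H\"older inequality \eqref{g Holder}, and the Gohberg--Kre\u{\i}n theorem (Theorem~\ref{SCK}). First I would treat the \emph{untruncated} operator
\begin{align}
  (\t\CD(g_0,g_1)f)(t,x) := \int_\R g_0(t)U(t-\ta)g_1(\ta)f(\ta)\,d\ta,
\end{align}
which, writing $A(t):=U(t)$ and regarding $g_0,g_1$ as multiplication operators, factors as $\t\CD(g_0,g_1)=(g_0A^\oc)\circ(A^\os g_1)$ with $g_0A^\oc:L^2_x\to L^2_\tx$ and $A^\os g_1:L^2_\tx\to L^2_x$. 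Applying \eqref{g Holder} with $\tfrac1{\al'}=\tfrac1{2\al_0'}+\tfrac1{2\al_1'}$ reduces the bound on $\t\CD$ to bounds on $\|g_0A^\oc\|_{\Sxtx}$ and $\|A^\os g_1\|_{\Stxx}$. Since $(p_n,q_n,\al_n)$ is admissible with $\si(p_n,q_n)=d$, Theorem~\ref{Ostri} with $s=0$ gives the orthonormal Strichartz estimate for $U(t)$, so the equivalence of conditions (1) and (2)--(3) in Lemma~\ref{DP} supplies exactly these two bounds; multiplying them out yields $\|\t\CD(g_0,g_1)\|_{\Stx}\ls\prod_{n=0,1}\|g_n\|_{L^{2p_n'}_tL^{2q_n'}_x}$.

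Next I would pass from $\t\CD$ to $\CD$ using Theorem~\ref{SCK}. The integral kernel of $\CD(g_0,g_1)$ is $\th(t-\ta)g_0(t)U(t-\ta)g_1(\ta)$, i.e. the restriction to $\{\ta<t\}$ of the kernel of $\t\CD(g_0,g_1)$, and $\t\CD(g_0,g_1)\in\FS^{\al'}(L^2_\tx)$ by the previous step. Admissibility forces $\al_n<p_n<\I$, hence $\al_n'>1$ and $\al'\in(1,\I)$ (the borderline $\al'=\I$ is simply the usual inhomogeneous Strichartz estimate and needs no truncation lemma), so Theorem~\ref{SCK} applies. Because Theorem~\ref{SCK} produces the kernel $\th(t-\ta)K(t,\ta)$ rather than $\ch_{\{0<\ta<t\}}K(t,\ta)$, for the integral $\int_0^t$ I would use $\int_0^t=\int_{-\I}^t-\int_{-\I}^0$: the first piece is the operator $\BD$ associated with $\BT=\t\CD(g_0,g_1)$ on $(a,b)=(-\I,\I)$, and the second piece equals $(g_0A^\oc)\circ(A^\os(\ch_{(-\I,0)}g_1))$, which is estimated exactly as $\t\CD$ above, restriction to a half-line only improving the norm. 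This gives the claimed bound.

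One technical gap must be filled: Theorem~\ref{SCK} requires $(t,\ta)\mapsto K(t,\ta)$ to be strongly continuous in $\CB(L^2_x)$, which holds when $g_0,g_1\in C^\I_c(\R^{1+d})$ but not for general $g_n\in L^{2p_n'}_tL^{2q_n'}_x$. So I would first carry out the whole argument for $g_0,g_1\in C^\I_c$ and then run a density argument: since $(g_0,g_1)\mapsto\CD(g_0,g_1)$ is bilinear and the $C^\I_c$ estimate controls differences, $\CD(g_0^{(k)},g_1^{(k)})$ is Cauchy in $\Stx$ along any $C^\I_c$ sequence approximating $(g_0,g_1)$, its limit is $\CD(g_0,g_1)$, and the bound passes to the limit. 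I expect this strong-continuity/density point to be the only genuine obstacle; everything else is bookkeeping with tools already established in Sections~3.1--3.4.
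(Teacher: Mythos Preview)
Your proposal is correct and follows essentially the same route as the paper: factor the untruncated operator as $g_0U^\oc\cdot U^\os g_1$, bound each factor via Lemma~\ref{DP} and Theorem~\ref{Ostri}, then invoke Theorem~\ref{SCK} to pass to the truncated operator. The paper's proof is terser---it works directly on $(0,\I)$ rather than on $\R$, so the splitting $\int_0^t=\int_{-\I}^t-\int_{-\I}^0$ is unnecessary, and it does not comment on the strong-continuity hypothesis or the density step---but these are exactly the kind of details one would expect to be suppressed, and your handling of them is fine (incidentally, the borderline $\al'=\I$ cannot occur here since $\si(p_n,q_n)=d$ forces $\al_n>1$).
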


\begin{proof}
	First, we consider the operator
	\begin{align}
		\BT(g_0,g_1) = \int_0^\I g_0(t)U(t-\ta)g_1(\ta)d\ta = g_0 U^\oc U^\os g_1.
	\end{align}
	By Lemma \ref{DP} and Theorem \ref{Ostri}, we obtain
	\begin{align}
		\|\BT(g_0,g_1)\|_{\FS^{\al'}_{\tx}} &\le \|g_0 U^\oc\|_{\FS^{2\al_0'}_{x \to (\tx)}} \|U^\os g_1\|_{\FS^{2\al_1'}_{(\tx)\to x}}  \\
		&\ls \prod_{n=0,1} \|g_n\|_{L^{2p_n'}_t L^{2q_n'}_x}.
	\end{align}
	Therefore, we get the desired estimate by Theorem \ref{SCK}.
\end{proof}

We extend Theorem \ref{Ostri} with $s=0$ as follows.
\begin{lemma} \label{U_V-U}
	Let $d\geq 1$ and $0 \in I \subset \R$ be an interval.
	Let $(p,q,\al)$ be admissible and $\si(p,q)=d$.
	Let $\mu \in [1, \I)$ and $\nu \in [1, \I]$ satisfy $\si(\mu,\nu)=2$.
	Then there exists a monotone increasing function $\ph:[0,\I) \to [0,\I)$ such that 
	\begin{align} \label{U_V-U est}
		\|\rh(U_V(t)_\st \ga)\|_{L^p_t(I,L^q_x)} \le \ph(\|V\|_{L^\mu_t(I,L^\nu_x)}) \|\ga\|_{\FS^\al}.
	\end{align}
\end{lemma}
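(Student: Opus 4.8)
The plan is to use the resolvent expansion \eqref{UVexp} of $U_V^\oc$ to reduce the estimate for $U_V$ to the already-established estimate for the free propagator $U$ (Theorem \ref{Ostri} with $s=0$) together with the boundedness of $\CR(V_0,V_1)$ on $L^2_\tx$ (Lemma \ref{Resol est}) and the Schatten bound for the Duhamel operator $\CD$ (Corollary \ref{DS}). First I would invoke the duality principle (Lemma \ref{DP}): the desired bound \eqref{U_V-U est} is equivalent to the ``orthonormal-dual'' estimate
\begin{align}
\|f\, (U_V)^\oc\|_{\FS^{2\al'}_{x \to (\tx)}} \ls \ph(\|V\|_{L^\mu_t(I,L^\nu_x)}) \|f\|_{L^{2p'}_t(I,L^{2q'}_x)},
\end{align}
so it suffices to produce such a bound. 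Substituting \eqref{UVexp}, write $f (U_V)^\oc = f U^\oc + f\,\CD(1,V_1)\,\CR(V_0,V_1)\,V_0 U^\oc$. The first term is controlled by Lemma \ref{DP} applied to Theorem \ref{Ostri} (with $s=0$), which is exactly the $\al$-admissibility hypothesis and $\si(p,q)=d$. For the second term I would factor it as a composition: $V_0 U^\oc : L^2_x \to L^2_\tx$ lies in a Schatten class by the same duality/Strichartz input (choosing an exponent matched to $\mu,\nu$ via $\si(\mu,\nu)=2$, so that $V_0\in L^{2\mu}_t L^{2\nu}_x$ sits in the dual Strichartz space); $\CR(V_0,V_1)$ is bounded on $L^2_\tx$ with norm $\le\ph(\|V\|)$ by Lemma \ref{Resol est}; and $f\,\CD(1,V_1): L^2_\tx \to \FS^{?}_{x\to(\tx)}$ is handled by Corollary \ref{DS} after inserting $\CD(1,V_1)$ as $\CD$ with $g_0 = 1$ (or, more carefully, splitting $V_1$ appropriately so both slots of $\CD$ carry half a power of $|V|$ and pairing the remaining half-power with $f$). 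Then the generalized Hölder inequality \eqref{g Holder} for Schatten norms of composed operators between different Hilbert spaces closes the estimate, with the Schatten exponents adding up correctly because $\tfrac1{\al'} = \tfrac1{2\al_0'}+\tfrac1{2\al_1'}$ as in Corollary \ref{DS} and the $L^2_\tx$ factor contributes exponent $\I$.

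The bookkeeping I need to watch is the distribution of the potential $V = V_0 V_1$, $V_0 = |V|^{1/2}$, across the three factors $\CD(1,V_1)$, $\CR(V_0,V_1)$, $V_0 U^\oc$: each application of Corollary \ref{DS} wants its two arguments in dual-Strichartz spaces $L^{2p_n'}_t L^{2q_n'}_x$, and I must choose the admissible triples $(p_n,q_n,\al_n)$ with $\si(p_n,q_n)=d$ so that the powers of $|V|^{1/2}$ land in spaces comparable to $L^{2\mu}_t L^{2\nu}_x$ (using $\si(\mu,\nu)=2$ to convert the Strichartz admissibility condition into the hypothesis on $V$), while the leftover factor $f$ stays in $L^{2p'}_t L^{2q'}_x$ and the Schatten exponents telescope to $2\al'$. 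Since only finitely many terms of the Neumann/resolvent expansion are truly present (the rest being summed by $\CR$), no convergence issue arises beyond Lemma \ref{Resol est}.

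The main obstacle is precisely this matching of exponents: one must verify that the admissibility constraints $\tfrac1{\al_n} \le \tfrac1{dp_n}+\tfrac1{q_n}$, $\al_n<p_n$ can be met simultaneously with $\si(p_n,q_n)=d$ and with the $L^{2\mu}_tL^{2\nu}_x$-membership of $|V|^{1/2}$ forced by $\si(\mu,\nu)=2$, so that the product of Schatten norms reproduces the target exponent $2\al'$ on the nose. I expect this to come down to a short computation with the scaling relations, and once the exponents are pinned down the rest is an application of \eqref{g Holder}, Corollary \ref{DS}, Lemma \ref{Resol est}, and Lemma \ref{DP} in sequence. A final detail is the dependence on the interval $I$: one restricts $V$ to $I$ from the start and notes that all the constants produced are monotone in $\|V\|_{L^\mu_t(I,L^\nu_x)}$, which yields the claimed $\ph$.
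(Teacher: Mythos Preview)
Your approach is correct and coincides with the paper's: duality (Lemma~\ref{DP}), the expansion \eqref{UVexp}, and Schatten H\"older. The exponent-matching you flag as the main obstacle is a non-issue once you observe $f\,\CD(1,V_1)=\CD(f,V_1)$ and put the entire $\FS^{2\al'}$ weight on this single factor (bounding it by $\|fU^\oc\|_{\FS^{2\al'}_{x\to(\tx)}}\,\|U^\os V_1\|_{\CB}$ and then applying Theorem~\ref{SCK}), while $\CR(V_0,V_1)$ and $V_0 U^\oc$ are controlled merely in operator norm by Lemma~\ref{Resol est} and the standard Strichartz estimate respectively; no Schatten class on $V_0 U^\oc$, and hence no second admissible triple, is needed.
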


\begin{proof}
	We use Lemma \ref{DP}.
	Let $f \in L^{2p'}_t(I,L^{2q'}_x)$.
	By \eqref{UVexp}, we have
	\begin{align}
		\|f U_V^\oc\|_{\Sxtx} \le \|f U^\oc\|_{\Sxtx} + \|\CD(f,V_1) \CR(V_0,V_1) V_0 U^\oc\|_{\Sxtx}.
	\end{align}
	On the one hand, Lemma \ref{DP} and Theorem \ref{Ostri} yield
	\begin{align}
		\|f U^\oc\|_{\Sxtx} \ls \|f\|_{L^{2p'}_t(I,L^{2q'}_x)}.
	\end{align}
	On the other hand, we have by Lemma \ref{Resol est}
	\begin{align}
		\|\CD(f,V_1) \CR(V_0,V_1) V_0 U^\oc\|_{\Sxtx}
		&\le \|\CD(f,V_1)\|_{\FS^{2\al'}_\tx} \|\CR(V_0,V_1)\|_{\CB(L^2_x)} \|V_0 U^\oc\|_{\CB(L^2_x \to L^2_\tx)} \\
		&\le \|f\|_{L^{2p'}_t(I,L^{2q'}_x)} \|V\|_{L^\mu_t L^\nu_x} \ph(\|V\|_{L^\mu_t L^\nu_x}),
	\end{align}
	where we estimated $\|\CD(f,V_1)\|_{\FS^{2\al'}_\tx}$ in the same way as the proof of Corollary \ref{DS}.
\end{proof}

Note that the ``usual" Strichartz estimates also hold:
\begin{lemma} \label{UVStri}
	Let $d \geq 3$.
	Let $p,q,\wt{p},\wt{q} \in [2, \I]$ satisfy $\si(p,q) = \si(\wt{p},\wt{q})=d/2$.
	Let $\mu \in [1, \I)$ and $\nu \in [1, \I]$ satisfy $\si(\mu,\nu) = 2$.
	Then there exists a monotone increasing function $\ph:[0,\I) \to [0, \I)$ such that
	\begin{align}
		&\|U_V(t)u_0\|_{L^p_tL^q_x}
		\leq \ph(\|V\|_{L^\mu_t L^\nu_x}) \|u_0\|_{L^2_x}, \label{UVStri1}\\
		&\No{ \int_\R U_V(t,\ta)f(\ta) d\ta }_{L^p_tL^q_x}
		\leq \ph(\|V\|_{L^\mu_t L^\nu_x}) \|f\|_{L^{\wt{p}'}_t L^{\wt{q}'}_x}, \label{UVStri2} \\
		&\No{ \int_0^t U_V(t,\ta)f(\ta) d\ta}_{L^p_tL^q_x}
		\leq \ph(\|V\|_{L^\mu_t L^\nu_x}) \|f\|_{L^{\wt{p}'}_t L^{\wt{q}'}_x}. \label{UVStri3}
	\end{align}
\end{lemma}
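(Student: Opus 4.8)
The plan is to run the standard perturbative argument for Strichartz estimates with a time‑dependent potential, organised through the resolvent expansion \eqref{UVexp} so that the possible non‑smallness of $\|V\|_{L^\mu_t L^\nu_x}$ is absorbed into the monotone function $\ph$ furnished by Lemma \ref{Resol est}. Write $V=V_0V_1$ with $V_0=|V|^{1/2}$, so that $\|V_0\|_{L^{2\mu}_t L^{2\nu}_x}=\|V_1\|_{L^{2\mu}_t L^{2\nu}_x}=\|V\|_{L^\mu_t L^\nu_x}^{1/2}$ (we abbreviate $\|V\|:=\|V\|_{L^\mu_t L^\nu_x}$ below), and let $(a,b)$ be the exponents with $\frac1a=\frac12-\frac1{2\mu}$ and $\frac1b=\frac12-\frac1{2\nu}$. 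The one arithmetic fact that makes the argument close is that the hypothesis $\si(\mu,\nu)=2$ forces $\si(a,b)=d/2$, while $\mu<\I$ forces $a\in(2,\I]$; hence $(a,b)$ is an admissible pair which is \emph{not} the double endpoint, and by H\"older multiplication by $V_0$ or $V_1$ maps $L^a_tL^b_x\to L^2_{t,x}$ and $L^2_{t,x}\to L^{a'}_tL^{b'}_x$, in each case with operator norm $\|V\|^{1/2}$.

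First I would prove \eqref{UVStri1}. The resolvent expansion \eqref{UVexp}, justified by Lemma \ref{Resol est}, gives
\begin{align}
 U_V(\cdot)u_0 = U(\cdot)u_0 + \CD(1,V_1)\,\CR(V_0,V_1)\,\bigl(V_0\,U^\oc u_0\bigr).
\end{align}
The free term obeys $\|U(\cdot)u_0\|_{L^p_tL^q_x}\ls\|u_0\|_{L^2_x}$ by the homogeneous Strichartz estimate. For the second term I bound the three factors in turn: $\|V_0\,U^\oc u_0\|_{L^2_{t,x}}\le\|V_0\|_{L^{2\mu}_tL^{2\nu}_x}\|U(\cdot)u_0\|_{L^a_tL^b_x}\ls\|V\|^{1/2}\|u_0\|_{L^2_x}$ by H\"older and the homogeneous Strichartz estimate for $(a,b)$; $\|\CR(V_0,V_1)\|_{\CB(L^2_{t,x})}\le\ph(\|V\|)$ by Lemma \ref{Resol est}; and, since $(\CD(1,V_1)g)(t)=-i\int_0^tU(t-\ta)\bigl(V_1(\ta)g(\ta)\bigr)d\ta$,
\begin{align}
 \|\CD(1,V_1)g\|_{L^p_tL^q_x}\ls\|V_1 g\|_{L^{a'}_tL^{b'}_x}\le\|V_1\|_{L^{2\mu}_tL^{2\nu}_x}\|g\|_{L^2_{t,x}}=\|V\|^{1/2}\|g\|_{L^2_{t,x}}
\end{align}
by H\"older and the retarded inhomogeneous Strichartz estimate with source pair $(a,b)$; since $(a,b)$ is not the double endpoint, this is a one‑endpoint inhomogeneous estimate and hence valid for every admissible $(p,q)$ when $d\ge3$. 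Multiplying the three bounds yields \eqref{UVStri1} with $\ph$ replaced by $C\bigl(1+\|V\|\,\ph(\|V\|)\bigr)$, which is again monotone increasing.

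Next, \eqref{UVStri2} follows from \eqref{UVStri1} by duality together with the cocycle identity $U_V(t,\ta)=U_V(t,0)U_V(0,\ta)$ and the unitarity relation $U_V(0,\ta)^*=U_V(\ta,0)$. Writing $\int_\R U_V(t,\ta)f(\ta)d\ta=U_V(t,0)g$ with $g:=\int_\R U_V(0,\ta)f(\ta)d\ta\in L^2_x$, for $h\in L^2_x$ one has
\begin{align}
 |\Br{g,h}_{L^2_x}|=\Abs{\int_\R\Br{f(\ta),U_V(\ta,0)h}_{L^2_x}d\ta}\le\|f\|_{L^{\wt{p}'}_tL^{\wt{q}'}_x}\|U_V(\cdot,0)h\|_{L^{\wt{p}}_tL^{\wt{q}}_x}\ls\ph(\|V\|)\|f\|_{L^{\wt{p}'}_tL^{\wt{q}'}_x}\|h\|_{L^2_x}
\end{align}
by \eqref{UVStri1} applied to the admissible pair $(\wt{p},\wt{q})$, whence $\|g\|_{L^2_x}\ls\ph(\|V\|)\|f\|_{L^{\wt{p}'}_tL^{\wt{q}'}_x}$, and a second application of \eqref{UVStri1} to $U_V(t,0)g$ gives \eqref{UVStri2}. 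Finally, \eqref{UVStri3} is obtained by repeating the proof of \eqref{UVStri1} for the retarded propagator $\CW_V f(t):=\int_0^tU_V(t,\ta)f(\ta)d\ta$: Duhamel's formula and Fubini --- exactly as in the derivation of \eqref{UVexp} --- give $\CW_V f=\CW f+\CD(1,V_1)\CR(V_0,V_1)(V_0\,\CW f)$ with $\CW f(t):=\int_0^tU(t-\ta)f(\ta)d\ta$, and the only change from the proof of \eqref{UVStri1} is that the source term $V_0\,U^\oc u_0$ is replaced by $V_0\,\CW f$, which is bounded in $L^2_{t,x}$ by $\|V_0\|_{L^{2\mu}_tL^{2\nu}_x}\|\CW f\|_{L^a_tL^b_x}\ls\|V\|^{1/2}\|f\|_{L^{\wt{p}'}_tL^{\wt{q}'}_x}$ via H\"older and the one‑endpoint retarded inhomogeneous Strichartz estimate. (Equivalently, \eqref{UVStri3} follows from \eqref{UVStri2} by the classical Christ--Kiselev lemma applied to the operator‑valued kernel $(t,\ta)\mapsto U_V(t,\ta)$.)

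The argument is essentially routine; its real content is already packaged in Lemma \ref{Resol est} together with the scaling identity $\si(\mu,\nu)=2\Rightarrow\si(a,b)=d/2$, which is precisely the statement that the H\"older‑conjugate exponents attached to $V$ land back on an admissible pair. The only point that deserves a word of care is the appeal to the free \emph{inhomogeneous} Strichartz estimate over the full admissible range: the leading term of \eqref{UVStri3} uses it for two possibly‑endpoint pairs $(p,q)$, $(\wt{p},\wt{q})$, i.e.\ the double‑endpoint retarded estimate, which for the Schr\"odinger equation is valid when $d\ge3$; every other use in the proof passes through the strictly non‑double‑endpoint pair $(a,b)$ and so needs only the classical one‑endpoint estimate. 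The Duhamel and Fubini manipulations are legitimate at the level of $L^2_x$‑valued functions, as in the setup of Lemma \ref{Resol est}.
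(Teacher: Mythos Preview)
Your proof is correct and follows essentially the same approach as the paper: the resolvent expansion \eqref{UVexp} combined with the free Strichartz estimates for \eqref{UVStri1}, duality for \eqref{UVStri2}, and the analogous expansion for the retarded operator (the paper writes it pointwise as \eqref{UVts}, you write it for the integrated operator $\CW_Vf$; these differ only by a Fubini) for \eqref{UVStri3}. Your write-up is considerably more detailed than the paper's terse sketch---in particular, you make explicit the exponent arithmetic $\si(\mu,\nu)=2\Rightarrow\si(a,b)=d/2$ and flag where the Keel--Tao endpoint estimate is invoked---but there is no substantive difference in strategy.
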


\begin{proof}
	The usual Strichartz estimates and \eqref{UVexp} immediately imply \eqref{UVStri1}.
	\eqref{UVStri1} and usual duality argument imply \eqref{UVStri2}.
	By the same argument that led to \eqref{UVexp}, we have
	\begin{align} \label{UVts}
		U_V(t,\ta)f = U(t-\ta)f - i \int_\ta^t U(t-\ta_1) V_1(\ta_1) (Tf)(\ta_1,\ta) d\ta_1, 
	\end{align}
	where $(Tf)(\ta_1,\ta)= \CR(V_0,V_1)(V_0U(\cdot-\ta)f)(t)$.
	The standard Strichartz estimates and \eqref{UVts} imply \eqref{UVStri3}.
\end{proof}

The following lemma is also useful:
\begin{lem.} \label{almost commu}
	Let $d \ge 3$.
	Let $s \in [0,\I)$.
	Let $\mu, \nu,\wt{\nu} \in (1, \I)$ satisfy $\frac{1}{\nu} = \frac{1}{\wt{\nu}} + \frac{s}{d}$ and $\si(\mu,\wt{\nu})=2$.
	Let $(p,q)$ be an admissible pair of the standard Strichartz estimate; that is, $p, q \in [2,\I]$ and $\si(p,q) = d/2$.
	Then there exists a monotone increasing function $\ph:[0,\I) \to [0,\I)$ such that
	\begin{align}
		\|\sd^s U_V(t) \sd^{-s} u_0\|_{L^p_t L^q_x} \le \ph(\|V\|_{L^\mu_t H^{s,\nu}_x}) \|u_0\|_{L^2_x}.
	\end{align}
\end{lem.}

\begin{proof}
	First, we assume
	\begin{align}
		\tw \le \frac{1}{p} + \frac{1}{\mu} \le 1. 
	\end{align}
	Since $V \in L^\mu_t H^{s,\nu}_x \hookrightarrow L^\mu_t L^{\wt{\nu}}_x$ and $\si(\mu,\wt{\nu})=2$, 
	the propagator $U_V(t)$ is well-defined.
	We have
	\begin{align}
		\|\sd^s U_V(t)\sd^{-s}u_0\|_{L^p_t L^q_x}
		&\le \|U(t)u_0\|_{L^p_t L^q_x} + \No{\int_0^t U(t-\ta)\sd^s (V(\ta)U_V(\ta) \sd^{-s} u_0) d\ta}_{L^p_t L^q_x} \\
		&\le C_0\|u_0\|_{L^2_x} + C_0\No{\sd^s \K{ V(t)U_V(t) \sd^{-s} u_0}}_{L^{r'}_t L^{c'}_x} \\
		&\le C_0\|u_0\|_{L^2_x} + CC_0\|V\|_{L^\mu_t H^{s,\nu}_x} \|\sd^s U_V(t) \sd^{-s} u_0\|_{L^p_t L^q_x},
	\end{align}
	where
	\begin{align}
		\frac{1}{r}:= 1-\frac{1}{\mu}-\frac{1}{p},
		\quad \frac{1}{c} := 1 - \frac{1}{\wt{\nu}} - \frac{1}{q}.
	\end{align}
	$(r,c)$ is an admissible pair of the standard Strichartz estimates.
	Therefore, we get
	\begin{align}
		\|\sd^s U_V(t)\sd^{-s}u_0\|_{L^p_t L^q_x} \le 2C_0 \|u_0\|_{L^2_x}
	\end{align}
	when $\|V\|_{L^\mu_t H^{s,\nu}_x} \le \de$ with sufficiently small $\de>0$.
	Let $\|V\|_{L^\mu_t([0,\I),H^{s,\nu}_x)} > \de$.
	Then there exist $0=T_0 < T_1 < \cdots < T_N < T_{N+1} = \I$ such that
	\begin{align}
		&\frac{\de}{2} \le \|V\|_{L^\mu_t(I_n, H^{s,\nu}_x)} \le \de, \mbox{ for } 0 \le n \le N; \label{N}\\
		&\|V\|_{L^\mu_t(I_{N+1}, H^{s,\nu}_x)} \le \de, \nonumber
	\end{align}
	where $I_n := [T_n,T_{n+1}]$.
	First, we have
	\begin{align}
		\|\sd^sU_V(t)\sd^{-s} u_0\|_{L^p_t(I_0,L^q_x)}
		\le 2C_0
		    \|u_0\|_{L^2_x}.
	\end{align}
	Let $V_1(t):= V(t) \ch_{I_1}(t)$. 
	Since $U_V(t,T_1) = U_{V_1}(t,T_1)$ for $t \in I_1$ and $U_{V_1}(t) = U(t)$ for $t \in I_0$, we have
	\begin{align}\label{UV ext}
		U_V(t) = U_{V_1}(t,T_1)U_V(T_1) = U_{V_1}(t) U(-T_1) U_V(T_1). 
	\end{align}
	Hence, we have
	\begin{align} 
		\|\sd^s U_V(t) \sd^{-s} u_0\|_{L^p_t(I_1,L^q_x)}
		&\le 2C_0\|\sd^s U_V(T_1)\sd^{-s} u_0\|_{L^2_x} \\
		&\le (2C_0)^2 \|u_0\|_{L^2_x}.
	\end{align}
	In the same way, we obtain
	\begin{align}
		\|\sd^s U_V(t) \sd^{-s} u_0\|_{L^p_t(I_n,L^q_x)}
		\le (2C_0)^{n+1} \|u_0\|_{L^2_x},
	\end{align}
	which implies
	\begin{align}
		\|\sd^s U_V(t) \sd^{-s} u_0\|_{L^p_t([0,\I),L^q_x)}
		&\le \sum_{n=0}^N (2C_0)^{n+1} \|u_0\|_{L^2_x}  \\
		&\le N(2C_0)^{N+1} \|u_0\|_{L^2_x}.
	\end{align}
	$N \ls \|V\|_{L^\mu_t H^{s,\nu}_x}^\mu$ yields the desired estimates.
	
	Next, we consider the case $\frac{1}{\mu} + \frac{1}{p} \le \tw$.
Let $\frac{1}{r}=\frac{1}{2}-\frac{1}{d}$.
Then the endpoint Strichartz estimates (\cite{1998KT}) imply
\begin{align}
	\|\sd^s U_V(t)\sd^{-s} u_0\|_{L^p_t L^q_x}
	&\le \|U(t)u_0\|_{L^p_t L^q_x}
	+ \No{ \int_0^t U(t-\ta)\sd^s (V(\ta)U_V(\ta) \sd^{-s}u_0)d\ta }_{L^p_t L^q_x} \\
	&\ls \|u_0\|_{L^2_x} + \|\sd^s (V(t)U_V(t)\sd^{-s}u_0)\|_{L^2_t L^{r'}_x} \\
	&\ls \|u_0\|_{L^2_x} + \|\sd^s V\|_{L^\mu_t L^\nu_x} \|\sd^s U_V(t)\sd^{-s} u_0 \|_{L^{\wt{p}}_t L^{\wt{q}}_x},
\end{align}	
where $(\wt{p}, \wt{q})$ is an admissible pair of the standard Strichartz estimates.
Since $\frac{1}{\mu}+\frac{1}{\wt{p}} = \tw$, we conclude that
\begin{align}
	\|\sd^s U_V(t)\sd^{-s} u_0\|_{L^p_t L^q_x}
	\le \ph(\|V\|_{L^\mu_t L^\nu_x}) \|u_0\|_{L^2_x}.
\end{align}

Finally, we consider the case $\frac{1}{\mu}+\frac{1}{p} \ge 1$.
We have
\begin{align}
	\|\sd^s U_V(t)\sd^{-s} u_0\|_{L^p_t L^q_x}
	&\le \|U(t)u_0\|_{L^p_t L^q_x}
	+ \No{ \int_0^t U(t-\ta)\sd^s (V(\ta)U_V(\ta) \sd^{-s}u_0)d\ta }_{L^p_t L^q_x} \\
	&\ls \|u_0\|_{L^2_x} + \|\sd^s (V(t)U_V(t)\sd^{-s}u_0)\|_{L^1_t L^2_x} \\
	&\ls \|u_0\|_{L^2_x} + \|\sd^s V\|_{L^\mu_t L^\nu_x} \|\sd^s U_V(t)\sd^{-s} u_0 \|_{L^{\wt{p}}_t L^{\wt{q}}_x},
\end{align}	
where $(\wt{p}, \wt{q})$ is an admissible pair of the standard Strichartz estimates.
Since $\frac{1}{\mu}+\frac{1}{\wt{p}} = 1$, we conclude that
\begin{align}
	\|\sd^s U_V(t)\sd^{-s} u_0\|_{L^p_t L^q_x}
	\le \ph(\|V\|_{L^\mu_t L^\nu_x}) \|u_0\|_{L^2_x}.
\end{align}

\end{proof}

\section{Strichartz estimates for density functions}
In this section, we prove the key estimates in this paper.
\begin{th.}\label{Key}
	Let $d \ge 3$.
	Let $0 \le \wt{s} \le s \le 1$.	
	Let $p,q,\mu,\nu,\al \in (1,\I)$ satisfy $\si(p,q)=d-s$ and $\si(\mu,\nu) = 2+\wt{s}$.
	Let $(p,q,\al)$ be admissible.
	Assume that there exist $p_j,q_j,\al_j \in (1,\I)$ for $j=0,1,2$
	and $\mu_j,\nu_j \in (1,\I)$ for $j=0,1$ such that
	\begin{align}
		&\si(p_j,q_j) = d-j,\quad (p_j,q_j,\al_j) \mbox{ are admissible for } j=0,1,2, \\
		&\si(\mu_0,\nu_0) = 2, \quad \si (\mu_1,\nu_1) = 2+s_1 := 2 + \frac{\wt{s}}{s}, \quad 1+s_1 < \frac{2}{\mu_1}+\frac{d}{2},\\
		&\frac{1}{X} = \frac{1-s}{X_0} + \frac{s}{X_1}, \quad X = p,q,\mu,\nu,\al,
		\quad \frac{1}{Y_1} = \frac{1}{2} \K{\frac{1}{Y_0} + \frac{1}{Y_2}}, \quad Y=p,q,\al.
	\end{align}
Then there exists a monotone increasing function $\ph:[0,\I) \to [0,\I)$ such that
	\begin{align}
		&\|\rh(U_V(t)_\st Q_0)\|_{L^p_t H^{s,q}_x}
		\le \ph(\|V\|_{L^{\mu}_t H^{s,\nu}_x})
		\|Q_0\|_{\CH^{s,\al}}, \label{g stri}\\
		&\|\rh(U_V(t)_\st Q_0) - \rh(U_W(t)_\st Q_0)\|_{L^p_t H^{s,q}_x} \\
		&\quad \le \ph(\|V\|_{L^{\mu}_t H^{s,\nu}_x} + \|W\|_{L^{\mu}_t H^{s,\nu}_x})
		\|V-W\|_{L^\mu_t H^{s,\nu}_x}
		\|Q_0\|_{\CH^{s,\al}}. \label{diff g stri}
	\end{align}
\end{th.}

The following corollary is enough for our analysis:
\begin{cor.} \label{key cor}
	Let $d=3$. 
	Then there exists a monotone increasing function $\ph:[0,\I) \to [0,\I)$ such that
	\begin{align}
		&\|\rh(U_V(t)_\st Q_0) \|_{L^2_t H^{1/2}_x} \le \ph(\|V\|_{L^2_t H^{1/2}_x}) \|Q_0\|_{\CH^{1/2,3/2}},\\
		&\|\rh(U_V(t)_\st Q_0) - \rh(U_W(t)_\st Q_0)\|_{L^2_t H^{1/2}_x} \\
		&\quad \le \ph(\|V\|_{L^2_t H^{1/2}_x} + \|W\|_{L^2_t H^{1/2}_x})
		\|V-W\|_{L^2_t H^{1/2}_x}
		\|Q_0\|_{\CH^{1/2,3/2}}.
	\end{align}
\end{cor.}

\begin{proof}
	Define exponents as follows:
	\begin{align}
		&s=\wt{s}=\tw, \quad p=q=\mu=\nu=2, \quad \al=\frac{3}{2}, \\
		&p_0 = q_0 = \frac{5}{3}, \quad \al_0 = \frac{5}{4},
		\quad p_1=q_1=\frac{5}{2}, \quad \al_1 = \frac{15}{8},
		\quad p_2=q_2=5, \quad \al_2=\frac{15}{4}, \\
		&\mu_0 = \mu_1=2, \quad \nu_0 = 3, \quad \nu_1 = \frac{3}{2}.
	\end{align}
	Then Theorem \ref{Key} implies the result.
\end{proof}

Before proving the above theorem, we define some notations.
For $\vF=(F_1,\dots, F_n) \in \CS(\BR^d)^n$, define
\begin{align}
	\CW_n [\vF](t)
	= (-i)^n \int_0^t dt_1 \int_0^{t_1} dt_2 \cdots \int_0^{t_{n-1}} dt_n
	\CU[F_1](t_1) \cdots \CU[F_n](t_n),
\end{align}
where $\CU[V](t):= U(t)^*V(t)U(t)$.
And we define multilinear operators by
\begin{align}
	T_{n,m}[\vF,Q_0, \vG](t) :=  \rh \K{ U(t)\CW_n[F](t) Q_0 \CW_m [G](t)^* U(t)^* }
\end{align}
for $\vF \in \CS(\BR^d)^n$ and $\vG \in \CS(\BR^d)^m$.
We have

\begin{lem.}\label{muliti est} Under the same assumptions as in Theorem \ref{Key}, 
	The following multilinear estimates hold:
	\begin{align}
	&\|T_{n,m} [\vF,Q_0,\vG]\|_{L^p_t H^{s,q}_x}
	\ls C_0^{n+m}
	\prod_{j=1}^n \|F_j\|_{L^{\mu}_t H^{s,\nu}_x}
	\|Q_0\|_{\CH^{s,\al}_x}
	\prod_{k=1}^m \|G_k\|_{L^{\mu}_t H^{s,\nu}_x}. 
\end{align}	
\end{lem.}

\begin{proof}
	It suffices to prove the following two estimates:
		\begin{align}
		&\|T_{n,m} [\vF,Q_0,\vG]\|_{L^{p_0}_t L^{q_0}_x}
		\ls C_0^{n+m}
		\prod_{j=1}^n \|F_j\|_{L^{\mu_0}_t L^{\nu_0}_x}
		\|Q_0\|_{\FS^{\al_0}_x}
		\prod_{k=1}^m \|G_k\|_{L^{\mu_0}_t L^{\nu_0}_x} \label{L2},
		\\
		&\|T_{n,m} [\vF,Q_0,\vG]\|_{L^{p_1}_t H^{1,q_1}_x}
		\ls C_0^{n+m}
		\prod_{j=1}^n \|F_j\|_{L^{\mu_1}_t H^{1,\nu_1}_x}
		\|Q_0\|_{\CH^{1, \al_1}_x}
		\prod_{k=1}^m \|G_k\|_{L^{\mu_1}_t H^{1,\nu_1}_x}. \label{H1}
	\end{align}	
	We only prove \eqref{H1} because \eqref{L2} can be shown similarly.
	We have 
	\begin{align}
		\|T_{n,m}[\vF,Q_0,\vG]\|_{L^{p_1}_t H^{1,q_1}_x}
		&\sim \|T_{n,m}[\vF,Q_0,\vG]\|_{L^{p_1}_t L^{q_1}_x} + \|\na T_{n,m}[\vF,Q_0,\vG]\|_{L^{p_1}_t L^{q_1}_x}.
	\end{align}
	Let $f \in L^{2p_1'}_t L^{2q_1'}_x$.
	Note that $\sd = \sdi - L \na$ with $L:= \na \sdi$.
	We have
	\begin{align}
		f(t)U(t)\CW_n[\vF](t) \sd^{-1} 
		&= f(t)U(t)\sdi (\sdi - L\na)\CW_n[\vF](t) \sd^{-1} \\
		&=:W_1 - W_2.
	\end{align}
	We can assume $n \ge 1$ because when $n=0$, we obtain \eqref{half} by Theorem \ref{Ostri}.
	$W_1$ is a good term, and the problem is $W_2$.
	We have
	\begin{align}
		W_2 = f(t)U(t)\sdi L \sum_{k=1}^n \CW_n[\vF^k](t) \sd^{-1} 
		+ f(t)U(t)\sdi L \CW_n[\vF](t) \na \sd^{-1},
	\end{align}
	where $\vF^k = (F_1, \cdots, F_{k-1}, \na F_k, F_{k+1}, \cdots, F_n)$.
	The last term is harmless.
	Let $F_{1,0}:= |F_1|^{1/2}$ and $F_1 = F_{1,0}F_{1,1}$.
	Note that $L^{\mu_1}_t H^{1,\nu_1}_x \hookrightarrow L^{\mu_1}_t L^{\wt{\nu_1}}_x$ with $\si(\mu_1, \wt{\nu_1}) =2$.
	Then we have
	\begin{align}
		&\No{ f(t)U(t)\sdi L
			\int_0^t dt_1 
			\int_0^{t_1} dt_2 \cdots \int_0^{t_{n-1}} dt_n
			\CU[F_1](t) \cdots \CU[\na F_k](t_k) \cdots \CU[F_n](t_n) \sdi}_{\FS^{2\al_1'}_{x \to (\tx)}} \\
		&\quad \le \No{f(t)U(t)\sdi L \int_0^t dt_1 U(t_1)F_{1,0}(t_1)}_{\FS^{2\al_1'}_\tx} \\
		&\qquad \times \No{ F_{1,1}(t_1)U(t_1)\int_0^{t_1} dt_2 \cdots \int_0^{t_{n-1}} dt_n
			\CU[F_1](t) \cdots \CU[\na F_k](t_k) \cdots \CU[F_n](t_n) \sdi }_{\CB(L^2_x \to L^2_\tx)} \\
		&\quad \ls C_0
		\|f\|_{L^{2p_1'}_t L^{2q_1'}_x}
		\|F\|_{L^{\mu_1}_t L^{\wt{\nu_1}}_x}^{1/2} \bigg\|
		F_{1,1}(t_1)U(t_1)
		\int_0^{t_1} dt_2 \CU[F_2](t_1)\\
		&\qquad \times
			\cdots 
			\int_0^{t_{k-1}} dt_k \CU[\na F_k](t_k) 
			\cdots
			\int_0^{t_{n-1}} dt_n \CU[F_n](t_n) \sdi\bigg\|_{\CB(L^2_x \to L^2_\tx)} =: I.
	\end{align}
	To justify the last inequality, we used Theorem \ref{SCK}.
	Moreover, we have
	\begin{align}
		I& \le C_0^k 
		\|f\|_{L^{2p_1'}_t L^{2q_1'}_x}
		\prod_{j=1}^{k-1}\|F_j\|_{L^{\mu_1}_t L^{\wt{\nu_1}}_x}\\
		&\quad \times
		\No{
			(\na F_k)(t_k) U(t_k)
			\int_0^{t_k}dt_{k+1} \CU[F_{k+1}](t_{k+1})
			\cdots
			\int_0^{t_{n-1}} dt_n \CU[F_n](t_n) \sdi
		}_{\CB(L^2_x \to L^{\wt{r}'}_t L^{\wt{c}'}_x)} \\
		&\le
		C_0^k \|f\|_{L^{2p_1'}_t L^{2q_1'}_x}
		\prod_{j=1}^k \|F_j\|_{L^{\mu_1}_t H^{1,\nu_1}_x}\\
		&\quad \times
		\No{
			U(t_k)
			\int_0^{t_k}dt_{k+1} \CU[F_{k+1}](t_{k+1})
			\cdots
			\int_0^{t_{n-1}} dt_n \CU[F_n](t_n) \sdi
		}_{\CB(L^2_x \to L^r_t L^{c^*}_x)} \\
		&\le
		C_0^k \|f\|_{L^{2p_1'}_t L^{2q_1'}_x}
		\prod_{j=1}^k \|F_j\|_{L^{\mu_1}_t H^{1,\nu_1}_x}\\
		&\quad \times
		\No{
			\sd
			U(t_k)
			\int_0^{t_k}dt_{k+1} \CU[F_{k+1}](t_{k+1})
			\cdots
			\int_0^{t_{n-1}} dt_n \CU[F_n](t_n) \sdi
		}_{\CB(L^2_x \to L^r_t L^c_x)},
	\end{align}
	where $\wt{r},\wt{c},r,c$ are defined as follows:
	Fix $r \ge 2$ such that $\tw-\frac{1}{\mu_1} < \frac{1}{r} < \min(\frac{d}{4}-\frac{s_1}{2}, 1-\frac{1}{\mu_1})$.
	Define $\wt{r} \in [2,\I]$ by $\frac{1}{\wt{r}'}=\frac{1}{\mu_1}+\frac{1}{r}$.
	Then there exists $c,\wt{c} \in [2,\I]$ such that $\si(r,c) = \si(\wt{r},\wt{c}) = \frac{d}{2}$.
	By direct calculation, we obtain
	\begin{align}
		\frac{1}{c^*}:=\frac{1}{\wt{c}'}-\frac{1}{\nu_1} = \frac{1}{c}-\frac{s_1}{d} > 0.
	\end{align}
	Since
	\begin{align}
		\No{
			\int_0^t U(t-\ta)(g(\ta)h(\ta)) d\ta
		}_{L^r_t H^{1,c}_x}
		\le C_0 \|g\|_{L^{\mu_1}_t H^{1,\nu_1}_x} \|h\|_{L^r_t H^{1,c}_x},
	\end{align}
	we have
	\begin{align}
		&\No{
			\sd
			U(t_k)
			\int_0^{t_k}dt_{k+1} \CU[F_{k+1}](t_{k+1})
			\cdots
			\int_0^{t_{n-1}} dt_n \CU[F_n](t_n) \sdi
		}_{\CB(L^2_x \to L^r_t L^c_x)} \\
		&\quad \le
		C_0^{n-k}
		\prod_{j=k+1}^n \|F_j\|_{L^{\mu_1}_t H^{1,\nu_1}_x}.
	\end{align}
	From the above, we obtain
	\begin{align}
		\|f(t) U(t) \sdi L \CW_n[\vF^k](t) \sd^{-1}\|_{\FS^{2\al_1'}_{x \to (\tx)}}
		\ls C_0^n \|f\|_{L^{2p_1'}_t L^{2q_1'}_x}\prod_{j=1}^n \|F_j\|_{L^{\mu_1}_t H^{1,\nu_1}_x},
	\end{align}
	which yields
	\begin{align} \label{half}
		\|f(t)U(t)\CW_n[\vF](t) \sd^{-1} \|_{\FS^{2\al_1'}_{x \to (\tx)}}
		\ls C_0^n \|f\|_{L^{2p_1'}_t L^{2q_1'}_x}\prod_{j=1}^n \|F_j\|_{L^{\mu_1}_t H^{1,\nu_1}_x}.
	\end{align}
	Therefore, we conclude that
	\begin{align}
		\|T_{n,m}[\vF,Q_0,\vG]\|_{L^{p_1}_t L^{q_1}_x}
		\ls C_0^{n+m} \K{\prod_{j=1}^n
			\|F_j\|_{L^{\mu_1}_t H^{1,\nu_1}_x}}
		\|Q_0\|_{\CH^{1,\al_1}}
		\K{\prod_{k=1}^m \|G_k\|_{L^{\mu_1}_t H^{1,\nu_1}_x}}.
	\end{align}
	Finally, we estimate $\|\na T_{n,m}[\vF,Q_0,\vG]\|_{L^{p_1}_t L^{q_1}_x}$.
	In the completely same way as the above, we get
	\begin{align}
		&\|T_{n,m} [\vF,Q_0,\vG]\|_{L^{p_2}_t L^{q_2}_x}
		\ls C_0^{n+m}
		    \K{\prod_{j=1}^n \|F_j\|_{L^{\mu_1}_t H^{1,\nu_1}_x}}
		    \|Q_0\|_{\CH^{1,\al_2}}
		    \K{\prod_{k=1}^ m\|G_k\|_{L^{\mu_1}_t H^{1,\nu_1}_x}}, \label{End H1} \\
		&\|\rh\Dk{\na U(t)\CW_n[\vF](t) \sdi Q_0 \sdi \CW_n[\vG](t)^* U(t)^* \na} \|_{L^{p_0}_t L^{q_0}_x} \\
		&\quad \ls C_0^{n+m} \K{\prod_{j=1}^n \|F_j\|_{L^{\mu_1}_t H^{1,\nu_1}_x}}
		\|Q_0\|_{\FS^{\al_0}}
		\K{\prod_{k=1}^m \|G_k\|_{L^{\mu_1}_t H^{1,\nu_1}_x}}. \label{deriv L2}
	\end{align}
	Therefore, we obtain by Lemma \ref{asym}
	\begin{align}
		&\|\na T_{n,m}[\vF,Q_0,\vG]\|_{L^{p_1}_t L^{q_1}_x} \\
		&\quad \le \No{\rh \Dk{\na U(t) \CW_n[\vF](t) Q_0 \CW_m[\vG](t)^* U(t)}}_{L^{p_1}_t L^{q_1}_x} \\
		&\qquad + \No{\rh \Dk{U(t) \CW_n[\vF](t) Q_0 \CW_m[\vG](t)^* U(t) \na}}_{L^{p_1}_t L^{q_1}_x} \\
		&\quad \ls C_0^{n+m} \K{\prod_{j=1}^n \|F_j\|_{L^{\mu_1}_t H^{1,\nu_1}_x}} \|Q_0\|_{\CH^{1,\al_1}} \K{\prod_{k=1}^m \|G_k\|_{L^{\mu_1}_t H^{1,\nu_1}_x}}.
	\end{align}
	
\end{proof}

\begin{proof}[{\bf Proof of Theorem \ref{Key}}]
	First, we prove \eqref{g stri}.
	We have the series decomposition of $U_V(t)$:
	\begin{align}\label{UVWV}
		U_V(t) = \sum_{n=0}^\I U(t)\CW_V^{(n)}(t),
	\end{align}
	where $\CW_V^{(n)}(t) := \CW_n[\vV](t)$ and $\vV:=(V,\dots, V)$.
	If $\|V\|_{L^\mu_t H^{s,\nu}_x} < 1/(2C_0)=:\de$, Lemma \ref{muliti est} implies
	\begin{align}
		\|\sd^s \rh[U_V(t)_\st Q_0]\|_{L^p_t L^q_x} 
		&\le \sum_{n,m=1}^\I \|\sd^s \rh[U(t)\CW_V^{(n)}(t) Q_0 \CW_V^{(m)}(t)^* U(t)^*]\|_{L^p_t L^q_x} \\
&\ls \sum_{n,m=0}^\I
		C_0^{n+m}
		\|V\|_{L^\mu_t H^{s,\nu}_x}^{n+m}
		\|Q_0\|_{\CH^{s,\al}_x} \\
		&\le C
		\|Q_0\|_{\CH^{s,\al}_x}.
	\end{align}
	Let $\|V\|_{L^\mu_t([0,\I), H^{s,\nu}_x)} >\de$.
	We can divide the interval $[0,\I)$ such that
	\begin{align}
		&0=T_0 < T_1 < \cdots < T_N < T_{N+1}=\I, \quad I_k := [T_k,T_{k+1}] \mbox{ for } k=0, \dots N,\\
		&\frac{\de}{2} \le \|V\|_{L^\mu_t(I_k,H^{s,\nu}_x)} \le \de \mbox{ for } k=0,\dots,N-1, 
		\quad \|V\|_{L^\mu_t(I_N,H^{s,\nu}_x)} \le \de.
	\end{align}
	Define $V_k(t) := V(t) \ch_{I_k}(t)$.
	Then \eqref{UV ext}, Lemma \ref{almost commu} and $N \lesssim \|V\|_{L^\mu_t H^{s,\nu}_x}^\mu$ yield
	\begin{align}
		&\|\sd^s \rh[U_V(t)_\st Q_0]\|_{L^p_t([0,\I),L^q_x)} \\
		&\quad \le \sum_{k=0}^N
		\|\sd^s \rh[U_{V_k}(t)_\st (U_{V_k}(T_k)^*U_V(T_k)Q_0U_V(T_k)^*U_{V_k}(T_k))]\|_{L^p_t(I_k,L^q_x)} \\
		&\quad \le C \sum_{k=0}^N \|U(T_k)^*U_V(T_k)Q_0U_V(T_k)^*U(T_k)\|_{\CH^{s,\al}} \\
		&\quad \le C \sum_{k=0}^N \|\sd^s U_V(T_k) \sd^{-s}\|_{\CB}^2
		                    \|Q_0\|_{\CH^{s,\al}_x} \\
		&\quad \le \ph(\|V\|_{L^\mu_t H^{s,\nu}_x}) (N+1) \|Q_0\|_{\CH^{s,\al}_x} \\
		&\quad \le \ph(\|V\|_{L^\mu_t H^{s,\nu}_x}) \|Q_0\|_{\CH^{s,\al}_x}.
	\end{align}
	
	Finally, we prove \eqref{diff g stri}.
	It suffices to show that
	\begin{align}
		&\|\sd^s \rh((U_V(t)-U_W(t)) Q_0 U_V(t))\|_{L^p_t L^q_x} \nonumber \\
		&\quad \le \ph(\|V\|_{L^\mu_t H^{s,\nu}_x} + \|W\|_{L^\mu_t H^{s,\nu}_x})
		\|V-W\|_{L^\mu_t H^{s,\nu}_x}
		\|Q_0\|_{\CH^{s,\al}}.
	\end{align}
	We have
	\begin{align}
		&\|\sd^s \rh((U_V(t)-U_W(t)) Q_0 U_V(t))\|_{L^p_t L^q_x} \\
		&\quad \le \sum_{n,m=0}^\I
		\No{
		\sd^s \rh\K{U(t)(\CW_V^{(n)}(t) - \CW_W^{(n)}(t)) Q_0 \CW^{(m)}_V(t)^* U(t)^*}
		}_{L^p_t L^q_x} \\
		&\quad \le \sum_{n,m=0}^\I \sum_{k=1}^n
		\No{
			\sd^s \rh\K{U(t) \CW_n[\vX_k](t) Q_0 \CW_m[\vV](t)^* U(t)^*}
		}_{L^p_t L^q_x},
	\end{align}
	where $\vV= (V,\dots, V)$, $\vX_k=((\vX_k)_1, \dots, (\vX_k)_n)$ and 
	\begin{align}
		&(\vX_k)_j = V \mbox{ if } 1 \le j \le k-1, \\
		&(\vX_k)_j = V-W \mbox{ if } j = k, \\
		&(\vX_k)_j = W \mbox{ if } k+1 \le j \le n.
	\end{align}
By Lemma \ref{muliti est}, if $\|V\|_{L^\mu_t H^{s,\nu}_x}$ and $\|W\|_{L^\mu_t H^{s,\nu}_x}$ are sufficiently small, we obtain
\begin{align}
	&\|\sd^s \rh((U_V(t)-U_W(t)) Q_0 U_V(t))\|_{L^p_t L^q_x} \\
	&\quad \ls  \|V-W\|_{L^\mu_t H^{s,\nu}_x}
	           \|Q_0\|_{\CH^{s,\al}}
	           \sum_{n,m=0}^\I \sum_{k=1}^n
	           C_0^{m+n}
	           \|V\|_{L^\mu_t H^{s,\nu}_x}^{m+k-1}
	           \|W\|_{L^\mu_t H^{s,\nu}_x}^{n-k} \\
	&\quad \ls \|V-W\|_{L^\mu_t H^{s,\nu}_x} \|Q_0\|_{\CH^{s,\al}}.
\end{align}
The same argument as we used to prove \eqref{g stri} yields
\begin{align}
	\|\sd^s \rh((U_V(t)-U_W(t)) Q_0 U_V(t))\|_{L^p_t L^q_x}
	\le \ph(\|V\|_{L^\mu_t H^{s,\nu}_x})
	    \|V-W\|_{L^\mu_t H^{s,\nu}_x}
	    \|Q_0\|_{\FS^{s,\al}}.
\end{align}
\end{proof}

\section{Proof of Theorem \ref{den main}}
In this section, we give a proof of Theorem \ref{den main} by giving a nonlinear estimates:
\begin{lem.} \label{Lem}
	Let $d=3$. If $\|\lxr^2 f(\xi)\|_{L^1_\xi \cap L^\I_\xi} < \I$, 
	then there exists a monotone increasing function $\ph:[0,\I) \to [0,\I)$ such that 
	\begin{align}
	&\| \rh(N_1[V]) \|_{L^2_t H^{1/2}_x}
		\ls_f \ph(\|V\|_{L^2_t H^{1/2}_x})
		    \|V\|_{L^2_t H^{1/2}_x}^2 \label{BB}\\
	&\| \rh(N_2[V]) \|_{L^2_t H^{1/2}_x}
	\ls_f
	\ph(\|V\|_{L^2_t H^{1/2}_x})
	\|V\|_{L^2_t H^{1/2}_x}^2 \label{CC}\\
	&\| \rh(N_3[V]) \|_{L^2_t H^{1/2}_x}
	\ls_f 
	\ph(\|V\|_{L^2_t H^{1/2}_x})
	\|V\|_{L^2_t H^{1/2}_x}^3. \label{DD}
	\end{align}
	See \eqref{ABCD} for the definition of $N_1$, $N_2$ and $N_3$.
\end{lem.}

\begin{lem.} \label{LemB}
	Let $d=3$. If $\|\lxr^2 f(\xi)\|_{L^1_\xi \cap L^\I_\xi} < \I$, 
	then there exists a monotone increasing function $\ph:[0,\I) \to [0,\I)$ such that 
	\begin{align}
		&\| \rh(N_1[V]) - \rh(N_1[W]) \|_{L^2_t H^{1/2}_x} \\
		&\quad \ls_f
		\ph(\|V\|_{L^2_t H^{1/2}_x}+\|W\|_{L^2_t H^{1/2}_x})
		\K{\|V\|_{L^2_t H^{1/2}_x} + \|W\|_{L^2_t H^{1/2}_x}}
		\|V-W\|_{L^2_t H^{1/2}_x},
		\label{diff BB}\\
		&\| \rh(N_2[V]) - \rh(N_2[W])\|_{L^2_t H^{1/2}_x} \\
		&\quad \ls_f 
		\ph(\|V\|_{L^2_t H^{1/2}_x}+\|W\|_{L^2_t H^{1/2}_x})
		\K{\|V\|_{L^2_t H^{1/2}_x}+\|W\|_{L^2_t H^{1/2}_x}}
		\|V-W\|_{L^2_t H^{1/2}_x},
		\label{diff CC}\\
		&\| \rh(N_3[V]) - \rh(N_3[W]) \|_{L^2_t H^{1/2}_x} \\
		&\quad \ls_f 
		\ph(\|V\|_{L^2_t H^{1/2}_x}+\|W\|_{L^2_t H^{1/2}_x})
		\K{\|V\|_{L^2_t H^{1/2}_x}^2+\|W\|_{L^2_t H^{1/2}_x}^2}
		\|V-W\|_{L^2_t H^{1/2}_x}. \label{diff DD}
	\end{align}
\end{lem.}

First, we prove that Lemmas \ref{Lem} and \ref{LemB} imply Theorem \ref{den main}.
\begin{proof}
	Let
	\begin{align}
		&\Ph[\rh_Q](t):= (1+\CL_1)^{-1} (\rh(U_V(t)_\st Q_0) + \rh(N_1[V])+ \rh(N_2[V])+ \rh(N_3[V])), \\
		&E(R):= \{\rh_Q \in L^2_t(\R,H^\tw_x) : \|\rh_Q\|_{L^2_t H^{1/2}_x} \leq R\},
	\end{align}
	where $V = w \ast \rh_Q$ and $R>0$ will be taken later.
	We prove $\Ph:E(R) \to E(R)$ is a contraction map.
	Corollary \ref{key cor}, Lemma \ref{Lem} and Young's inequality imply
	\begin{align}
		\| \Ph[\rh_Q] \|_{L^2_t H^{1/2}_x}
		&\leq \|(1+\CL_1)^{-1}\|_{\CB(L^2_{t,x})}
		\left( \|\rh(U_V(t)_\st Q_0)\|_{L^2_t H^{1/2}_x}
			+ \|\rh(N_1[V])\|_{L^2_t H^{1/2}_x} \right.\\
		&\left.\qquad  + \|\rh(N_2[V])\|_{L^2_t H^{1/2}_x}
			+ \|\rh(N_3[V])\|_{L^2_t H^{1/2}_x}
			 \right)\\
		& \le C_f \ph(\|V\|_{L^2_t H^{1/2}_x})
		\K{\|V\|_{L^2_t H^{1/2}_x}^2 + \|V\|_{L^2_t H^{1/2}_x}^3 + \|Q_0\|_{\CH^{1/2,3/2}} } \\
		& \le C_f \ph(R)(R^2 + R^3 + \ep_0).
	\end{align}
	Hence, if $R >0$ and $\ep_0 >0$ are sufficiently small, then we obtain
	\begin{align}
		\| \Ph[\rh_Q] \|_{L^2_{t,x}} \leq R.
	\end{align}
	Therefore, the map $\Ph:E(R) \to E(R)$ is well-defined.
	We can prove the contraction and the uniqueness of the solution by Corollary \ref{key cor} and Lemma \ref{LemB}.
\end{proof}

In the rest of this section, we prove only \eqref{CC} because we can prove the other estimates in the same way.
Let 
\begin{align}
	N_2[V](t) &= i \int_0^t D_V(t,\ta) \ga_f V(\ta) U(\ta-t)d\ta-i\int_0^t D_V(t,\ta) V(\ta) \ga_f U(\ta-t)d\ta  \\
	&=:N_2^1[V](t) - N_2^2[V](t). \label{C1 C2}
\end{align}

\subsection{Estimate for $N_2^1$}
First, note that we have the wave operator decomposition
\begin{align}\label{eq:wave expansion}
	U_V(t,\ta) = U(t) \sum_{n=0}^\I \CW_V^{(n)}(t,\ta) U(\ta)^*,
\end{align}
where
\begin{align}
	&\CW_V^{(0)}(t,\ta) = I, \\
	&\CW_V^{(n)}(t,\ta) = (-i)^n \int_{\ta}^t dt_1 \cdots \int_{\ta}^{t_{n-1}} dt_n U(t) \CU[V](t_1) \cdots \CU[V](t_n) U(\ta)^*
\end{align}
and $\CU[F]:= U(t)^* F(t) U(t)$.
Hence, it follows that
\begin{align}
	\rh(N_2^1[V](t))
	&= \int_0^t \int_0^\ta \rh[U_V(t,\ta)V(\ta)U(\ta-\ta_1)  \ga_f V(\ta_1) U(\ta_1-t)] d\ta_1 d\ta  \\
	&= \sum_{n=0}^\I \int_0^t \int_0^\ta
	\rh \Big[U(t) \CW_V^{(n)}(t,\ta) U(\ta)^* V(\ta)U(\ta-\ta_1) \ga_f V(\ta_1) U(\ta_1-t) \Big] d\ta_1 d\ta \\
	&=: \sum_{n=0}^\I \CN_n(V).
\end{align}
Define
\begin{align}
	&\CW^{(n)}[g_1,\dots,g_n](t,\ta)
	:= (-i)^n \int_{\ta}^t dt_1 \int_{\ta}^{t_1} dt_2 \cdots \int_{\ta}^{t_{n-1}} dt_n
	U(\ta) \CU[g_1](t_1) \cdots \CU[g_n](t_n) U(\ta)^*,
\end{align}
and
\begin{align}
	&\CN_n[g_1, \dots, g_{n+2}](t) \\
	&\quad := \int_0^t d\ta \int_0^\ta d\ta_1
	\rh \Big[U(t) \CW^{(n)}[g_1,\cdots,g_n](t,\ta) U(\ta)^* g_{n+1}(\ta)U(\ta-\ta_1)
	\ga_f g_{n+2}(\ta_1) U(\ta_1-t) \Big] \\
	&\quad = (-i)^{n} \int_0^t dt_1 \int_0^{t_1}dt_2 \cdots \int_0^{t_{n-1}} dt_n \int_0^{t_n} d\ta \int_0^\ta d\ta_1 \\
	&\qquad \times \rh \Big[U(t) \CU[g_1](t_1) \cdots \CU[g_n](t_n) U(\ta)^*
	g_{n+1}(\ta) U(\ta-\ta_1)\ga_f g_{n+2}(\ta_1) U(\ta_1-t) \Big].
\end{align}

\subsubsection{With no derivative I}
Let $W \in C_c^\I((0,\I) \times \BR^3)$.
On the one hand, we have
\begin{align}
	&\abs{\int_0^\I \int_{\R^3}  W(t,x) \CN_n[g_1,\dots,g_{n+2}](t,x) dx dt } \\
	&\quad = \left| \Tr \left[ \int_0^\I  U(t)^* W(t,x) U(t) dt
	\int_0^t U(t_1)^* g_1(t_1) U(t_1)dt_1
	\cdots
	\int_0^{t_{n-1}} U(t_n)^* g_n(t_n) U(t_n)dt_n
	\right. \right. \\
	&\left. \left. \qquad \qquad \times
	\int_0^{t_n} U(\ta)^* g_{n+1}(\ta) U(\ta)d\ta
	\int_0^\ta U(\ta_1)^* \ga_f g_{n+2}(\ta_1) U(\ta_1)d\ta_1 \right] \right| \\
	&\quad = \left| \Tr \left[ \int_0^\I  U(t)^* W(t,x) U(t) dt \sdi
	\sd \int_0^t U(t_1)^* g_1(t_1) U(t_1)dt_1
	\times
	\cdots
	\right. \right. \\
	&\left. \left. \qquad \qquad \cdots \times
	\int_0^{t_n} U(\ta)^* g_{n+1}(\ta) U(\ta)d\ta
	\int_0^\ta U(\ta_1)^* \ga_f g_{n+2}(\ta_1) U(\ta_1)d\ta_1 \right] \right|.
\end{align}
Since $\sd = \sdi + \na \cdot \na \sdi$, we get
\begin{align}
	\abs{\int_0^\I \int_{\R^3}  W(t,x) \CN_n[g_1,\dots,g_{n+2}](t,x) dx dt }
	\le C_0^{n+2} \|W\|_{L^2_t L^2_x} \prod_{j=1}^{n+1}\|g_j\|_{L^2_t H^1_x} \|g_{n+2}\|_{L^2_t L^2_x}.
\end{align}
From the above, we obtain
\begin{align} \label{eq:inter. g}
	\|\CN_n[g_1,\dots,g_{n+2}]\|_{L^2_t L^2_x} \le C_0^{n+2} \prod_{j=1}^{n+1}\|g_j\|_{L^2_t H^1_x} \|g_{n+2}\|_{L^2_t L^2_x}.
\end{align}
On the other hand, we obtain
\begin{align}
	&\abs{\int_0^\I \int_{\R^3}  W(t,x) \CN_1[g_1,\dots,g_{n+2}](t,x) dx dt } \\
	&\quad = \left| \Tr \left[ \sdi \int_0^\I  U(t)^* W(t,x) U(t) dt \int_0^t U(t_1)^* g_1(t_1) U(t_1)dt_1
	\times
	\cdots
	\right. \right. \\
	&\left. \left. \qquad \qquad \cdots \times
	\int_0^{t_n} U(\ta)^* g_{n+1}(\ta) U(\ta)d\ta
	\int_0^\ta U(\ta_1)^* \ga_f g_{n+2}(\ta_1) U(\ta_1)d\ta_1 \sd \right] \right|.
\end{align}
Again, by the same argument as above, we obtain
\begin{align} \label{eq:inter. h}
	\|\CN_1[g_1,\dots,g_{n+2}]\|_{L^2_t L^2_x}
	\le C_0^{n+2} \prod_{j=1}^{n+1} \|g_j\|_{L^2_t L^2_x} \|g_{n+2}\|_{L^2_t H^1_x}.
\end{align}
By interpolating \eqref{eq:inter. g} and \eqref{eq:inter. h}, we obtain
\begin{align} \label{eq:no deriv}
	\|\CN_1[g_1,\dots,g_{n+2}]\|_{L^2_t L^2_x} \le C_0^{n+2} \prod_{j=1}^{n+2} \|g_j\|_{L^2_t H^{1/2}_x}.
\end{align}

\subsubsection{With no derivative II}
Next, we prove
\begin{align}\label{eq:no deriv II}
	\|\CN_1[g_1,\dots,g_{n+2}]\|_{L^{5/3}_\tx}
	\le C_0^{n+2} \prod_{j=1}^n
	\|g_j\|_{L^{2}_t L^3_x}
	\|g_{n+1}\|_{L^{20/11}_\tx} \|g_{n+2}\|_{L^{20/11}_\tx}.
\end{align}
Let $W \in C_c^\I((0,\I) \times \BR^3)$.
Let $W^0 := |W|^\tw$ and $W=W^0 W^1$.
Define $g_j^0 := |g|^{\frac{4}{11}}$ and $g_j =g_j^0 g_j^1$ for $j=n+1,n+2$.
Also, define $g_j^0:= |g_j|^\tw$ and $g_j = g_j^0 g_j^1$ for $j=1,\dots,n$.
Then, by Theorem \ref{SCK}, we have
\begin{align}
	&\abs{\int_0^\I \int_{\R^3}  W(t,x) \CN_1[g_1,\dots,g_{n+2}](t,x) dx dt } \\
	&\quad = \left| \Tr \left[ \int_0^\I  U(t)^* W(t,x) U(t) dt
	\int_0^t U(t_1)^* g_1(t_1) U(t_1)dt_1
	\cdots
	\int_0^{t_{n-1}} U(t_n)^* g_n(t_n) U(t_n)dt_n
	\right. \right. \\
	&\left. \left. \qquad \qquad \times
	\int_0^{t_n} U(\ta)^* g_{n+1}(\ta) U(\ta)d\ta
	\int_0^\ta U(\ta_1)^* \ga_f g_{n+2}(\ta_1) U(\ta_1)d\ta_1 \right] \right| \\
	&\quad \le \|U^\os W^0\|_{\FS^{10}_{\tx \to x}}
	\No{W^1(t)U(t)\int_0^t U(\ta)^* g_1^0(\ta) d\ta }_{\FS^{5}_\tx}
	\times \cdots \\
	&\qquad \cdots \times
	\No{g_n^1(t_n)U(t_n)\int_0^{t_n} U(\ta)^* g_{n+1}^0(\ta) d\ta }_{\FS^{5}_\tx} \\
	&\qquad \times
	\No{g_{n+1}^1(\ta)U(\ta) \int_0^\ta d\ta_1 \ga_f U(\ta_1) g_{n+2}^0(\ta_1)}_{\FS^{5/3}_\tx}
	\|g_{n+2}^1 U^\oc\|_{\FS^{10}_{x\to \tx}} \\
	&\quad \ls_f C_0^{n+2} \|W^0\|_{L^5_\tx} \|W^1\|_{L^5_\tx}
	\prod_{j=1}^n \|g_j\|_{L^2_t L^3_x}
	\|g_{n+1}^0\|_{L^5_\tx}
	\|g_{n+1}^1\|_{L^{20/7}_\tx}
	\|g_{n+2}^0\|_{L^{20/7}_\tx}
	\|g_{n+2}^1\|_{L^5_\tx}\\
	&\quad = C_0^{n+2}
	\|W\|_{L^{5/2}_\tx}
	\prod_{j=1}^{n} \|g_j\|_{L^2_t L^3_x}
	\|g_{n+1}\|_{L^{20/11}_\tx}
	\|g_{n+2}\|_{L^{20/11}_\tx}.
\end{align}

\subsubsection{With derivative}
Finally, we prove that
\begin{align} \label{eq:with deriv}
	\|\na \CN_1[g_1,\dots,g_{n+2}]\|_{L^{5/2}_\tx}
	\le C_0^{n+2} \prod_{j=1}^{n} \|\sd g_j\|_{L^2_t L^{3/2}_x} \|\sd g_{n+1}\|_{L^{20/9}_\tx} \|\sd g_{n+2}\|_{L^{20/9}_\tx}.
\end{align}
Let $W \in C_c^\I((0,\I)\times \BR^3)$.
Let $f_0 := |f|^\tw$ and $f = f_0 f_1$.
Then we have
\begin{align}
	&\abs{\int_0^\I \int_{\R^3} dt (\na W)(t,x) \CN_1 [g_1,\dots,g_{n+2}](t,x) dx dt } \\
	&\quad \le \left| \Tr \left[ \int_0^\I dt  U(t)^* W(t) U(t) \sdi
	\int_0^tdt_1 \sd U(t_1)^* g_1(t_1) U(t_1)
	\cdots
	\int_0^{t_1} dt_n U(t_n)^* g_n(t_n) U(t_n)
	\right. \right. \\
	&\qquad \left. \left. \qquad \times
	\int_0^{t_n} d\ta U(\ta)^* g_{n+1}(\ta) U(\ta)
	\int_0^{t_{n+1}} d\ta_1 U(\ta_1)^* \ga_f g_{n+2}(\ta_1) U(\ta_1)\na \right] \right| \\
	&\qquad + \left| \Tr \left[ \int_0^\I dt \sdi U(t)^* W(t) U(t) 
	\na \int_0^tdt_1 U(t_1)^* g_1(t_1) U(t_1)
	\cdots
	\int_0^{t_1} dt_n U(t_n)^* g_n(t_n) U(t_n)
	\right. \right. \\
	&\left. \left. \qquad \qquad \times
	\int_0^{t_n} d\ta U(\ta)^* g_{n+1}(\ta) U(\ta)
	\int_0^{t_{n+1}} d\ta_1 U(\ta_1)^* \ga_f g_{n+2}(\ta_1) U(\ta_1)\sd \right] \right|=:A + B.
\end{align}
Since we have
\begin{align}
	&\|\rh(U(t)_\st \ga_0)\|_{L^{5/2}_\tx} \ls \|\sd \ga_0\|_{\FS^{15/8}}, \\
	&\|\rh(U(t)_\st \ga_0)\|_{L^{20/11}_\tx} \ls \|\sd^{\tw} \ga_0\|_{\FS^{15/11}}, \\
	&\sd = \sdi - \na\cdot\na \sdi,
\end{align}
the duality argument and the usual Strchartz estimates implies
\begin{align}
	&A \le C_0^{n+2} \prod_{j=1}^{n} \|\sd g_j\|_{L^2_t L^{3/2}_x} \|\sd g_{n+1}\|_{L^{20/9}_\tx} \|\sd g_{n+2}\|_{L^{20/9}_\tx}, \\
	&B \le C_0^{n+2} \prod_{j=1}^{n} \|\sd g_j\|_{L^2_t L^{3/2}_x} \|\sd g_{n+1}\|_{L^{20/9}_\tx} \|\sd g_{n+2}\|_{L^{20/9}_\tx},
\end{align}
which yields the desired estimate.

\subsubsection{Conclusion}
By interpolating \eqref{eq:no deriv II} and \eqref{eq:with deriv}, we obtain
\begin{align} \label{eq:homo deriv}
	\||\na|^\tw \CN_1[g_1,\dots,g_{n+2}]\|_{L^2_t L^2_x} \le C_0^{n+2} \prod_{j=1}^{n+2} \|g_j\|_{L^2_t H^{1/2}_x}.
\end{align}
By \eqref{eq:no deriv} and \eqref{eq:homo deriv}, we get
\begin{align}
	\|\CN_1 [g_1,\dots,g_{n+2}]\|_{L^2_t H^{1/2}_x} \le C_0^{n+2} \prod_{j=1}^{n+2} \|g_j\|_{L^2_t H^{1/2}_x}.
\end{align}
Therefore, we have
\begin{align}
	\|N_2^1(V)\|_{L^2_t H^{1/2}_x} &\le \sum_{n=0}^\I \|\CN_n(V)\|_{L^2_t H^{1/2}_x}
	= \sum_{n=0}^\I \|\CN_n[V,\dots,V]\|_{L^2_t H^{1/2}_x} \\
	&\le \sum_{n=0}^\I C_0^{n+2} \|V\|_{L^2_t H^{1/2}_x}^{n+2}
	\ls \|V\|_{L^2_t H^{1/2}_x}^2
\end{align}
for sufficiently small $\|V\|_{L^2_t H^{1/2}_x}$.
We can prove \eqref{CC} in the same argument as in the proof of Lemmas \ref{Resol est}, \ref{almost commu} and Theorem \ref{Key}.

\subsection{Estimate for $N_2^2$}
By \eqref{eq:wave expansion}, we have
\begin{align}
	N_2^2(V) &= \sum_{n=0}^\I \int_0^t \int_0^\ta
	\rh \Big[U(t) \CW_V^{(n)}(t,\ta)U(\ta)^* V(\ta)U(\ta-\ta_1) V(\ta_1) \ga_f U(\ta_1-t) \Big] d\ta_1 d\ta \\
	&= \sum_{n=0}^\I \CM_n(V).
\end{align}
Define $\CM_n[g_1, \dots, g_{n+2}]$ in the same manner as $\CN_n[g_1,\dots, g_{n+2}]$.

\subsubsection{With no derivative I}
In the same way as we proved \eqref{eq:no deriv}, we have
\begin{align}\label{eq:with no deriv b I}
	\|\CM_n[g_1,\dots,g_{n+2}]\|_{L^2_t L^2_x} \le C_0^{n+2} \prod_{j=1}^{n+2} \|g_j\|_{L^2_t H^{1/2}_x}.
\end{align}

\subsubsection{With no derivative II}
In the same way as we proved \eqref{eq:no deriv II}, we get
\begin{align}\label{eq:with no deriv b II}
	\|\CM_n[g_1,\dots,g_{n+2}]\|_{L^{20/9}_\tx}
	\le C_0^{n+2} \prod_{j=1}^{n}
	\|g_j\|_{L^{5/2}_x}
	\|g_{n+1}\|_{L^{5/2}_\tx}
	\|g_{n+2}\|_{L^{20/11}_\tx}.
\end{align}

\subsubsection{With derivatives}
We will prove
\begin{align} \label{eq:with deriv b}
	\|\na \CM_n[g_1,\dots,g_{n+2}]\|_{L^{20/11}_\tx}
	\le C_0^{n+2} \prod_{j=1}^{n}
	\|\sd g_j\|_{L^{5/3}_\tx}
	\|\sd g_{n+1}\|_{L^{5/3}_\tx}
	\|\sd g_{n+2}\|_{L^{20/9}_\tx}.
\end{align}
First, note that we can prove the following lemma by the standard argument with the Whitney decomposition.
\begin{lemma} \label{lem:MCK}
	Let $X_1$, $X_2$ and $X$ be Banach spaces.
	Assume that bilinear maps $B,B_<:L^{p_1}_t(\BR_+,X_1) \times L^{p_2}_t(\BR_+,X_2) \to L^p_t(\BR_+,X)$ are defined by
	\begin{align}
		&B(g_1,g_2):= \int_0^\I dt_1 K_1(t,t_1)g_1(t_1) \int_0^\I dt_2 K_2(t_1,t_2)g_2(t_2), \\
		&B_<(g_1,g_2):= \int_0^t dt_1 K_1(t,t_1)g_1(t_1) \int_0^{t_1} dt_2 K_2(t_1,t_2)g_2(t_2).
	\end{align}
	If $p>p_1$ and $1/p_1+1/p_2 > 1$, then we have
	\begin{align}
		\|B(g_1,g_2)\|_{L^p_t X}
		\ls \|g_1\|_{L^{p_1}_t X_1}
		\|g_2\|_{L^{p_2}_t X_2}
		\Longrightarrow
		\|B_<(g_1,g_2)\|_{L^p_t X}
		\ls \|g_1\|_{L^{p_1}_t X_1}
		\|g_2\|_{L^{p_2}_t X_2}.
	\end{align}
\end{lemma}
Hence, it suffices to prove
\begin{align} \label{eq:M}
	\|\na\CM_n^*[g_1,\dots,g_{n+2}] \|_{L^{20/11}_\tx} \ls \|\sd g_1\|_{L^{5/3}_\tx}\cdots \|\sd g_{n+1}\|_{L^{5/3}_\tx} \|\sd g_{n+2}\|_{L^{20/9}_\tx},
\end{align}
where
\begin{align}
	&\CM^*_n[g_1, \dots, g_{n+2}](t)
	:= (-i)^{n} \int_0^\I dt_1 \cdots \int_0^\I dt_n \int_0^{\I} d\ta \int_0^\I d\ta_1 \\
	&\qquad \times \rh \Big[U(t) \CU[g_1](t_1) \cdots \CU[g_n](t_n) U(\ta)^*
	g_{n+1}(\ta) U(\ta-\ta_1)g_{n+2}(\ta_1) \ga_f U(\ta_1-t) \Big].
\end{align}
For simplicity, we only consider $\CM_0^*[g_1,g_2]$; we can deal with the general case in the completely same way.
Let $W \in C_c^\I((0,\I)\times \R^3)$.
Then we have
\begin{align}
	&\abs{\int_0^\I dt \int_{\R^3} dx W(t,x) \na \CM_0^*[g_1,g_2](t,x) } \\
	&\quad \le \No{\sd^{-\fh-\ep} \int_0^\I dt U(t)^* \na W(t) U(t)\sd^{-\fh} }_{\FS^{20/7}} \\
	&\qquad \times \No{\sd^{\fh} \int_0^\I dt_1 U(t_1)^* g_1(t_1) U(t_1) \int_0^\I dt_2 U(t_2)^* g_2(t_2)\ga_f U(t_2)^* \sd^{\fh+\ep} }_{\FS^{20/13}}
	= A B.
\end{align}
By \cite[Theorem 3.1]{2018CHP}, we have
\begin{align} \label{eq:3/2}
	\||\na|^{\frac{3}{2}}\rh(U(t)_\st \ga)\|_{L^2_\tx} \ls \|\sd^{1+\ep} \ga \sd\|_{\FS^2}.
\end{align}
By interpolating with
\begin{align}
	\||\na|^{\tw}\rh(U(t)_\st \ga)\|_{L^{5/3}_\tx} \ls \|\sd^\tw \ga \sd^\tw\|_{\FS^{5/4}},
\end{align}
we obtain
\begin{align}
	\||\na| \rh(U(t)_\st \ga)\|_{L^{20/11}_\tx} \ls \|\sd^{\fh+\ep} \ga \sd^\fh\|_{\FS^{20/13}}.
\end{align}
Note that the above $\ga$ does not need to be self-adjoint.
Hence we have
\begin{align}
	A \ls \|W\|_{L^{20/9}_\tx}.
\end{align}

Next, we estimate $B$.
On the one hand, we have
\begin{align}
	B_1&:= \No{\int_0^\I dt_1 U(t_1)^* g_1(t_1) U(t_1) \int_0^\I dt_2 U(t_2)^* g_2(t_2)\ga_f U(t_2)^* \sd^{\fh+\ep} }_{\FS^{20/13}} \\
	&\ls \No{\int_0^\I dt_1 U(t_1)^* g_1(t_1) U(t_1) \sdi \int_0^\I dt_2 U(t_2)^* (\na g_2)(t_2)\ga_f U(t_2)^* \sd^{\fh+\ep} }_{\FS^{20/13}} \\
	&\quad + \No{\int_0^\I dt_1 U(t_1)^* g_1(t_1) U(t_1) \sdi \int_0^\I dt_2 U(t_2)^* g_2(t_2)\na \ga_f U(t_2)^* \sd^{\fh+\ep} }_{\FS^{20/13}}.
\end{align}
Since Lemma \ref{asym} implies
\begin{align}
	&\|\rh(U(t)_\st \ga)\|_{L^{5/2}_t L^{60/29}_x} \ls \|\ga \sd^\fh \|_{\FS^{60/37}}, \\
	&\|\rh(U(t)_\st \ga)\|_{L^{20/11}_\tx} \ls \|\ga \sd^{\ft}\|_{\FS^{15/11}},
\end{align}
we have
\begin{align}
	&\No{\int_0^\I dt_1 U(t_1)^* g_1(t_1) U(t_1) \sdi \int_0^\I dt_2 U(t_2)^* g_2(t_2)\na \ga_f U(t_2)^* \sd^{\fh+\ep} }_{\FS^{20/13}} \\
	&\le \No{\int_0^\I U(t_1)^* g_1(t_1) U(t_1) dt_1 \sd^{-\fh}}_{\FS^{15/7}}
	\No{\sd^{-\ft} \int_0^\I U(t_2)^* g_2(t_2)\ga_f \na U(t_2)^* \sd^{\fh+\ep} dt_2 }_{\FS^{15/4}} \\
	&\ls \|g_1\|_{L^{5/3}_t L^{60/31}_x} \|g_2\|_{L^{20/9}_\tx} \|\lxr^{2} f(\xi)\|_{L^1_\xi \cap L^\I_\xi}
	\ls \|\sd g_1\|_{L^{5/3}_\tx} \|g_2\|_{L^{20/9}_\tx} \|\lxr^{2} f(\xi)\|_{L^1_\xi \cap L^\I_\xi}.
\end{align}
By interpolating
\begin{align}
	&\||\na| \rh(U(t)_\st \ga)\|_{L^2_\tx} \ls \|\sd^\tw \ga \sd^{1+\ep}\|_{\FS^2} \\
	&\||\na|^{\tw}\rh(U(t)_\st \ga)\|_{L^{5/3}_\tx} \ls \|\sd^\tw \ga \sd^\tw \|_{\FS^{5/4}},
\end{align}
we obtain
\begin{align}
	\||\na|^\frac{3}{4} \rh(U(t)_\st \ga)\|_{L^{20/11}_\tx} \ls \|\sd^\tw \ga \sd^{\frac{3}{4} + \ep}\|_{\FS^{20/13}}.
\end{align}
Hence, we conclude that
\begin{align}
	\No{\sd^{-\tw}\int_0^\I U(t_2)^* (\na g_2)(t_2)\ga_f U(t_2)^* \sd^{\fh+\ep} dt_2 }_{\FS^{20/7}}
	\ls \|\sd^{\ft} g_2\|_{L^{20/9}_\tx} \|\lxr^2 f(\xi)\|_{L^1_\xi \cap L^\I_\xi}.
\end{align}
Therefore, by using
\begin{align}
	\|\rh(U(t)_\st \ga)\|_{L^{5/2}_t L^{30/17}_x} \ls \|\ga \sd^\tw\|_{\FS^{10/7}},
\end{align}
we have
\begin{align}
	&\No{\int_0^\I dt_1 U(t_1)^* g_1(t_1) U(t_1) \sdi \int_0^\I dt_2 U(t_2)^* (\na g_2)(t_2)\ga_f U(t_2)^* \sd^{\fh+\ep} }_{\FS^{20/13}} \\
	&\ls \No{\int_0^\I dt_1 U(t_1)^* g_1(t_1) U(t_1) \sd^{-\tw}}_{\FS^{10/3}}
	\No{\sd^{-\tw}\int_0^\I dt_2 U(t_2)^* (\na g_2)(t_2) \ga_f U(t_2)^* \sd^{\fh+\ep} }_{\FS^{20/7}} \\
	&\ls \|g_1\|_{L^{5/3}_t L^{30/13}_x} \|\sd^{\ft} g_2\|_{L^{20/9}_\tx}  \|\lxr^2 f(\xi)\|_{L^1_\xi \cap L^\I_\xi} \\
	&\ls \|\sd g_1\|_{L^{5/3}_\tx} \|\sd^{\ft} g_2\|_{L^{20/9}_\tx}  \|\lxr^2 f(\xi)\|_{L^1_\xi \cap L^\I_\xi}.
\end{align}
From the above, we get
\begin{align}
	B_1 \ls \|\sd g_1\|_{L^{5/3}_\tx} \|\sd^{\ft} g_2\|_{L^{20/9}_\tx} \|\lxr^2 f(\xi)\|_{L^1_\xi \cap L^\I_\xi}.
\end{align}

On the other hand, we have
\begin{align}
	B_2&:= \No{\na \int_0^\I dt_1 U(t_1)^* g_1(t_1) U(t_1) \int_0^\I dt_2 U(t_2)^* g_2(t_2)\ga_f U(t_2)^* \sd^{\fh+\ep} }_{\FS^{20/13}} \\
	&\le \No{\int_0^\I dt_1 U(t_1)^* (\na g_1)(t_1) U(t_1) \int_0^\I dt_2 U(t_2)^* g_2(t_2)\ga_f U(t_2)^* \sd^{\fh+\ep} }_{\FS^{20/13}} \\
	&\quad + \No{\int_0^\I dt_1 U(t_1)^* g_1(t_1) U(t_1) \int_0^\I dt_2 U(t_2)^* (\na g_2)(t_2)\ga_f U(t_2)^* \sd^{\fh+\ep} }_{\FS^{20/13}} \\
	&\quad + \No{\int_0^\I dt_1 U(t_1)^* g_1(t_1) U(t_1) \int_0^\I dt_2 U(t_2)^* g_2(t_2) \na \ga_f U(t_2)^* \sd^{\fh+\ep} }_{\FS^{20/13}} \\
	&= C_1 + C_2 + C_3.
\end{align}
For $C_1$, we obtain
\begin{align}
	C_1 &\ls \No{\int_0^\I dt_1 U(t_1)^* (\na g_1)(t_1) U(t_1) \sdi \int_0^\I dt_2 U(t_2)^* (\na g_2)(t_2)\ga_f U(t_2)^* \sd^{\fh+\ep} }_{\FS^{20/13}} \\
	&\quad + \No{\int_0^\I dt_1 U(t_1)^* (\na g_1)(t_1) U(t_1) \sdi \int_0^\I dt_2 U(t_2)^* g_2(t_2)\na \ga_f U(t_2)^* \sd^{\fh+\ep}}_{\FS^{20/13}}
\end{align}
By using
\begin{align}
	&\|\rh(U(t)_\st \ga)\|_{L^{5/2}_x} \ls \|\ga \sd \|_{\FS^{15/8}}, \\
	&\|\rh(U(t)_\st \ga)\|_{L^{20/11}_t L^{30/19}_x} \ls \|\ga \|_{\FS^{60/49}},
\end{align}
we conclude that
\begin{align}
	&\No{\int_0^\I dt_1 U(t_1)^* (\na g_1)(t_1) U(t_1) \sdi \int_0^\I dt_2 U(t_2)^* g_2(t_2) \na \ga_f U(t_2)^* \sd^{\fh+\ep} }_{\FS^{20/13}} \\
	&\ls \No{\int_0^\I dt_1 U(t_1)^* (\na g_1)(t_1) U(t_1) \sdi}_{\FS^{15/7}}
	\No{\int_0^\I dt_2 U(t_2)^* g_2(t_2) \na \ga_f U(t_2)^* \sd^{\fh+\ep} }_{\FS^{60/11}} \\
	&\ls \|\sd g_1\|_{L^{5/3}_\tx} \|g_2\|_{L^{20/9}_t L^{30/11}_x} \|\lxr^2 f(\xi)\|_{L^1_\xi \cap L^\I_\xi}
	\ls \|\sd g_1\|_{L^{5/3}_\tx} \|\sd g_2\|_{L^{20/9}_\tx} \|\lxr^2 f(\xi)\|_{L^1_\xi \cap L^\I_\xi}.
\end{align}
By using
\begin{align}
	&\|\rh(U(t)_\st \ga)\|_{L^{5/2}_x} \ls \|\ga \sd \|_{\FS^{15/8}}, \\
	&\|\rh(U(t)_\st \ga)\|_{L^{20/11}_\tx} \ls \|\ga \sd^\ft\|_{\FS^{15/11}},
\end{align}
we get
\begin{align}
	&\No{\int_0^\I dt_1 U(t_1)^* (\na g_1)(t_1) U(t_1) \sdi \int_0^\I dt_2 U(t_2)^* (\na g_2)(t_2)\ga_f U(t_2)^* \sd^{\fh+\ep} }_{\FS^{20/13}} \\
	&\ls \|\sd g_1\|_{L^{5/3}_\tx} \|\sd g_2\|_{L^{20/9}_\tx}\|\lxr^2 f(\xi)\|_{L^1_\xi \cap L^\I_\xi}.
\end{align}
From the above, we have
\begin{align}
	C_1 &\ls \|\sd g_1\|_{L^{5/3}_\tx} \|\sd g_2\|_{L^{20/9}_\tx} \|\lxr^{2}f(\xi)\|_{L^1_\xi \cap L^\I_\xi}.
\end{align}
In the same way, we obtain
\begin{align}
	C_3 \le \|\sd g_1\|_{L^{5/3}_\tx} \|\sd g_2\|_{L^{20/9}_\tx} \|\lxr^{3}f(\xi)\|_{L^1_\xi \cap L^\I_\xi}
\end{align}
For $C_2$, we have
\begin{align}
	C_2 &\le
	\No{ 
		\int_0^\I dt_1 U(t_1)^* g_1(t_1) U(t_1) \sd^{-\tw}
	}_{\FS^{10/3}} \\
	&\qquad \times \No{\int_0^\I dt_2 \sd^{\tw} U(t_2)^* (\na g_2)(t_2)\ga_f U(t_2)^* \sd^{\fh+\ep} }_{\FS^{20/7}}.
\end{align}
Since
\begin{align}
	\|\rh(U(t)_\st \ga)\|_{L^{5/2}_t L^{30/17}_x} \ls \|\ga \sd^\tw\|_{\FS^{10/7}},
\end{align}
we have
\begin{align}
	\No{ 
		\int_0^\I dt_1 U(t_1)^* g_1(t_1) U(t_1) \sd^{-\tw}
	}_{\FS^{10/3}}
	\ls \|g_1\|_{L^{5/3}_t L^{30/13}_x} \ls \|\sd g_1\|_{L^{5/3}_\tx}.
\end{align}
Note that
\begin{align}
	&\No{\sd^\tw \int_0^\I dt_2 U(t_2)^* (\na g_2)(t_2)\ga_f U(t_2)^* \sd^{\fh+\ep} }_{\FS^{20/7}} \\
	&\ls \No{\sd^{-\tw} \int_0^\I dt_2 U(t_2)^* (|\na|^2 g_2)(t_2)\ga_f U(t_2)^* \sd^{\fh+\ep} }_{\FS^{20/7}} \\
	&\quad + \No{\sd^{-\tw} \int_0^\I dt_2 U(t_2)^* (\na g_2)(t_2)\na \ga_f U(t_2)^* \sd^{\fh+\ep} }_{\FS^{20/7}}.
\end{align}
By interpolating
\begin{align}
	&\||\na| \rh(U(t)_\st \ga)\|_{L^2_\tx} \ls \|\sd^\tw \ga \sd^{1+\ep}\|_{\FS^2} \\
	&\||\na|^{\tw}\rh(U(t)_\st \ga)\|_{L^{5/3}_\tx} \ls \|\sd^\tw \ga \sd^\tw \|_{\FS^{5/4}},
\end{align}
we obtain
\begin{align}
	\||\na|^\frac{3}{4} \rh(U(t)_\st \ga)\|_{L^{20/11}_\tx} \ls \|\sd^\tw \ga \sd^{\frac{3}{4} + \ep}\|_{\FS^{20/13}}.
\end{align}
Hence, we have
\begin{align}
	&\No{\sd^{-\tw} \int_0^\I dt_2 U(t_2)^* (|\na|^2 g_2)(t_2)\ga_f U(t_2)^* \sd^{\fh+\ep} }_{\FS^{20/7}} \\
	&\ls \||\na|^{\frac{5}{4}}g_2\|_{L^{20/9}_\tx} \|\lxr^2 f(\xi)\|_{L^1_\xi \cap L^\I_\xi}
	\ls \|\sd^{\frac{5}{4}} g_2\|_{L^{20/9}_\tx} \|\lxr^2 f(\xi)\|_{L^1_\xi \cap L^\I_\xi}.
\end{align}
From the above, we conclude that
\begin{align}
	C_2 \ls \|\sd g_1\|_{L^{5/3}_\tx} \|\sd^{\frac{5}{4}} g_2\|_{L^{20/9}_\tx} \|\lxr^3 f(\xi)\|_{L^1_\xi \cap L^\I_\xi}.
\end{align}
Finally, interpolating
\begin{align}
	B_1&=\No{\int_0^\I dt_1 U(t_1)^* g_1(t_1) U(t_1) \int_0^\I dt_2 U(t_2)^* g_2(t_2)\ga_f U(t_2)^* \sd^{\fh+\ep} }_{\FS^{20/13}} \\
	&\ls \|\sd g_1\|_{L^{5/3}_\tx} \|\sd^\ft g_2\|_{L^{20/9}_\tx} \|\lxr^2 f(\xi)\|_{L^1_\xi \cap L^\I_\xi}, \\
	B_2&=\No{\na \int_0^\I dt_1 U(t_1)^* g_1(t_1) U(t_1) \int_0^\I dt_2 U(t_2)^* g_2(t_2)\ga_f U(t_2)^* \sd^{\fh+\ep} }_{\FS^{20/13}} \\
	&\ls \|\sd g_1\|_{L^{5/3}_\tx} \|\sd^{\frac{5}{4}} g_2\|_{L^{20/9}_\tx} \|\lxr^3 f(\xi)\|_{L^1_\xi \cap L^\I_\xi}.
\end{align}
we obtain
\begin{align}
	B&=\No{\sd^{\fh}\int_0^\I dt_1 U(t_1)^* g_1(t_1) U(t_1) \int_0^\I dt_2 U(t_2)^* g_2(t_2)\ga_f U(t_2)^* \sd^{\fh+\ep} }_{\FS^{20/13}} \\
	&\ls \|\sd g_1\|_{L^{5/3}_\tx} \|\sd g_2\|_{L^{20/9}_\tx} \|\lxr^{3} f(\xi)\|_{L^1_\xi \cap L^\I_\xi}.
\end{align} 

\subsubsection{Conclusion}
By interpolating \eqref{eq:with no deriv b II} and \eqref{eq:with deriv b}, we obtain
\begin{align} \label{eq:homo deriv 2}
	\||\na|^\tw \CM_1[g_1,\dots,g_{n+2}]\|_{L^2_t L^2_x} \le C_0^{n+2} \prod_{j=1}^{n+2} \|g_j\|_{L^2_t H^{1/2}_x}.
\end{align}
By \eqref{eq:with no deriv b I} and \eqref{eq:homo deriv 2}, we get
\begin{align}
	\|\CM_1 [g_1,\dots,g_{n+2}]\|_{L^2_t H^{1/2}_x} \le C_0^{n+2} \prod_{j=1}^{n+2} \|g_j\|_{L^2_t H^{1/2}_x}.
\end{align}
Therefore, we have
\begin{align}
	\|N_2^2(V)\|_{L^2_t H^{1/2}_x} &\le \sum_{n=0}^\I \|\CM_n(V)\|_{L^2_t H^{1/2}_x}
	= \sum_{n=0}^\I \|\CM_n[V,\dots,V]\|_{L^2_t H^{1/2}_x} \\
	&\le \sum_{n=0}^\I C_0^{n+2} \|V\|_{L^2_t H^{1/2}_x}^{n+2}
	\ls \|V\|_{L^2_t H^{1/2}_x}^2
\end{align}
for sufficiently small $\|V\|_{L^2_t H^{1/2}_x}$.
We can prove \eqref{CC} in the same argument as in the proof of Lemmas \ref{Resol est}, \ref{almost commu} and Theorem \ref{Key}.

\section{Proof of Lemma \ref{reduction}}
\begin{proof}
Let  
\begin{align}
	Q(t) &= U_V(t)_\st Q_0 - i\int_0^t U_V(t,\ta)_\st [V(\ta),\ga_f] d\ta \\
	&=: Q_1(t) + Q_2(t).
\end{align}
First we consider $Q_1(t)$.
We have
\begin{align}
 \sd^\tw Q_1(t) \sd^\tw = (\sd^\tw U_V(t) \sd^{-\tw})_\st (\sd^\tw Q_0 \sd^\tw).
\end{align}
It is easliy proven by \eqref{Duh1} that $\sd^\tw U_V(t) \sd^{-\tw}$ is strongly continuous.
Therefore, $Q_1(t) \in C(\BR, \CH^\tw)$.
Next we consider $Q_2(t)$.
We obtain
\begin{align}
 Q_2(t+h) - Q_2(t) &= -i \int_t^{t+h} U_V(t+h,\ta)[V(\ta),\ga_f]U_V(\ta,t+h) d\ta \\
 &\quad  -i \int_0^t (U_V(t+h,\ta)-U_V(t,\ta))[V(\ta),\ga_f]U_V(\ta,t+h) d\ta \\
 &\quad  -i \int_0^t U_V(t,\ta)[V(\ta),\ga_f](U_V(\ta,t+h)-U_V(\ta,t)) d\ta \\
 &=: A + B + C.
\end{align}
Lemma \ref{almost commu} and Kato--Seiler--Simon inequality (\cite{1975SS}; see also \cite[Theorem 4.1]{2005Simon}) imply
\begin{align}
 &\|\sd^\tw A \sd^\tw\|_{\FS^2} \\
 &\le \int_t^{t+h} \|\sd^\tw U_V(t+h,\ta) \sd^{-\tw}\|_{\CB(L^2_x)}
                   \|\sd^\tw [V(\ta),\ga_f] \sd^\tw\|_{\FS^2}
                   \|\sd^{-\tw} U_V(\ta,t) \sd^\tw\|_{\CB(L^2_x)} d\ta \\
 &\le \ph(\|V\|_{L^2_t H^{1/2}_x}) \|\lxr f(\xi)\|_{L^2_\xi} \int_t^{t+h} \|V(\ta)\|_{H^{1/2}_x}d\ta
 \to 0 \mbox{ as } h \to 0.
\end{align}
By the similar argument and the fact that $\sd^\tw U_V(t) \sd^{-\tw}$ is strongly continuous, we have
\begin{align}
 \|\sd^\tw B \sd^\tw\|_{\FS^2} \to 0 \mbox{ as } h \to 0, \quad 
 \|\sd^\tw C \sd^\tw\|_{\FS^2} \to 0 \mbox{ as } h \to 0,
\end{align}
which yields $Q_2(t) \in C(\BR, \CH^\tw)$.
Therefore, we obtain $Q(t) \in C(\BR, \CH^\tw)$.

Next, we prove the scattering.
First we consider $Q_1(t) \in C(\BR,\CH^\tw)$.
We have
\begin{align}
 &\|U(-t)Q_1(t)U(t)-U(-s)Q_1(s)U(s)\|_{\FS^3} \\
 &\quad \le \|U(-t)U_V(t)-U(-s)U_V(s)\|_{\CB} \|Q_0\|_{\FS^3}
            +\|Q_0\|_{\FS^3}  \|U_V(t)^*U(t)-U_V(s)^*U(s)\|_{\CB}.
\end{align}
Since
\begin{align}
	\|(U(-t)U_V(t)-U(-s)U_V(s))u_0\|_{L^2_x}
	&= \No{ \int_s^t U(\ta)^* V(\ta) U_V(\ta) u_0 d\ta }_{L^2_x} \\
	&\ls \|V(\ta) U_V(\ta)u_0\|_{L^1_\ta([s,t],L^2_x)} \\
	&\le \|V\|_{L^2_\ta([s,t], L^3_x)} \|U_V(\ta)u_0\|_{L^2_\ta L^6_x} \\
	&\le \ph(\|V\|_{L^2_\ta L^3_x}) \|V\|_{L^2_\ta([s,t], L^3_x)} \|u_0\|_{L^2_x},
\end{align}
we obtain
\begin{align} \label{sc}
	\|U(-t)U_V(t)-U(-s)U_V(s)\|_\CB \le \ph(\|V\|_{L^2_\ta L^3_x}) \|V\|_{L^2_\ta([s,t], L^3_x)} \to 0
	\mbox{ as } t,s \to \I.
\end{align}
Therefore, we get
\begin{align}
	\|U(-t)Q_1(t)U(t)-U(-s)Q_1(s)U(s)\|_{\FS^3} \to 0 \mbox{ as } t,s \to \I.
\end{align}
Next we consider $Q_2(t) \in C(\BR,\CH^\tw)$.
We have
\begin{align}
 &\|U(-t)Q_2(t)U(t)-U(-s)Q_2(s)U(s)\|_{\FS^3} \\
 &\quad \le \|U(-t)U_V(t)-U(-s)U_V(s)\|_\CB
        \No{\int_0^t U_V(\ta)^* [V(\ta),\ga_f]U_V(\ta) d\ta}_{\FS^3} \\
 &\qquad + \No{ \int_s^t U_V(\ta)^*[V(\ta),\ga_f] U_V(\ta) d\ta }_{\FS^3} \\
 &\qquad + \No{\int_0^s U_V(\ta)^* [V(\ta),\ga_f]U_V(\ta) d\ta}_{\FS^3} 
           \|U(-t)U_V(t)-U(-s)U_V(s)\|_\CB.
\end{align}
Let $V_0 := |V|^{\frac{2}{5}}$ and $V = V_0 V_1$.
Then we have
\begin{align}
	\No{\int_I U_V(t)^*V(t)\ga_f U_V(t) dt}_{\FS^3}
	&\le \No{\int_I dt U_V(t)^* V_0(t) }_{\FS^{10}_{(\tx)\to x}} \|V_1 \ga_f U_V(t)\|_{\FS^{30/7}_{x \to(t,x)}} \\
	&\le \ph(\|V\|_{L^2_t L^3_x}) \|\lxr^\tw f(\xi)\|_{L^2_\xi} \|V\|_{L^2_t(I,L^2_x)}.
\end{align}
Therefore, we obtain
\begin{align}
	\No{ \int_s^t U_V(\ta)^*[V(\ta),\ga_f] U_V(\ta) d\ta }_{\FS^3}
	&\le \ph(\|V\|_{L^2_\ta L^3_x}) \|\lxr^\tw f(\xi)\|_{L^2_\xi} \|V\|_{L^2_\ta([s,t],L^2_x)} \nonumber \\
	&\to 0 \mbox{ as } t,s \to \I. \label{f sc}
\end{align}
It follows from \eqref{sc} and \eqref{f sc} that
\begin{align}
	\|U(-t)Q_2(t)U(t)-U(-s)Q_2(s)U(s)\|_{\FS^3} \to 0 \mbox{ as } t,s \to \I.
\end{align}
\end{proof}

\section*{Acknowledgments}
The author would like to thank Professor Kenji Nakanishi at Kyoto University for many comments and discussions. Professor Julien Sabin and Mr. Antoine Borie at the University of Rennes carefully read the first version of this paper uploaded to arXiv. Moreover, they gave the author many useful comments and spent much time discussing it. In particular, they pointed out that the argument in Sect.5 of the first version needed to be modified. The author deeply appreciates their contributions.

This work was supported by JST, the establishment of university fellowships towards the creation of science technology innovation, Grant Number JPMJFS2123.


\begin{thebibliography}{99}

\bibitem{2019BHLNS}
Bez, N., Hong, Y., Lee, S., Nakamura, S., Sawano, Y.:
On the Strichartz estimates for orthonormal systems of initial data with regularity.
Adv. Math. {\bf 354} (2019).

\bibitem{2003BS}
Birman, M.S., Solomyak, M.:
Double operator integrals in a Hilbert space.
Integral Equations Operator Theory {\bf 47} (2003), no.2, 131--168.


\bibitem{1974BDF}
Bove, A., Da Prato, G., Fano, G.:
An existence proof for the Hartree-Fock time-dependent problem with bounded two-body interaction.
Comm. Math. Phys.
{\bf 37} (1974), 183--191.

\bibitem{1976BDF}
Bove, A., Da Prato, G., Fano, G.:
On the Hartree-Fock time-dependent problem.
Comm. Math. Phys.
{\bf 49} (1976), 25--33.

\bibitem{1976C}
Chadam, J.M.:
The time-dependent Hartree-Fock equations with Coulomb two-body interaction.
\newblock Comm. Math. Phys. 
{\bf 46} (1976), 99--104.

\bibitem{2001CK}
Christ, M., Kiselev, A.:
Maximal functions associated to filtrations.
J. Funct. Anal. {\bf 179} (2001), no.2, 409--425.

\bibitem{2017CHP}
Chen, T., Hong, Y., Pavlovi\'{c}, N.: Global well-posedness of the NLS system for infinitely many fermions. Arch. Ration. Mech. Anal. {\bf 224} (2017), no.1, 91--123.


\bibitem{2018CHP}
Chen, T., Hong, Y., Pavlovi\'{c}, N.:
On the scattering problem for infinitely many fermions in dimensions $d \geq 3$ positive temperature.
Ann. Inst. H. Poincar\'{e} Anal. Non Lin\'{e}aire {\bf 35} (2018), no. 2, 393--416.

\bibitem{2020CS}
Collot, C., de Suzzoni, A.-S.:
Stability of equilibria for a Hartree equation for random fields.
J. Math. Pures Appl. (9) {\bf 137} (2020), 70--100.

\bibitem{2022CS}
Collot, C., de Suzzoni, A.-S.:
Stability of steady states for Hartree and Schrödinger equations for infinitely many particles.
Ann. H. Lebesgue {\bf 5} (2022), 429--490.





\bibitem{2021D}
Dong, X.:
The Hartree equation with a constant magnetic field: well-posedness theory.
Lett. Math. Phys. {\bf 111} (2021), no.4, Paper No. 101.


\bibitem{2014FLLS} Frank, R.L., Lewin, M., Lieb, E.H., Seiringer, R.:
Strichartz inequality for orthonormal functions,
J. Eur. Math. Soc. {\bf 16} (2014), no. 7, 1507--1526.


\bibitem{2017FS} Frank, R.L., Sabin, J.:
Restriction theorems for orthonormal functions, Strichartz inequalities, and uniform Sobolev estimates.
Amer. J. Math. {\bf 139} (2017), no. 6, 1649--1691.


\bibitem{1970Go-Kre}
Gohberg, I.C., Kre\u{\i}n, M.G.:
Theory and applications of Volterra operators in Hilbert space.
Transl. Math. Monogr., Vol. {\bf 24},
American Mathematical Society, Providence, RI, (1970).


\bibitem{2023H} Hadama, S.:
Asymptotic stability of a wide class of steady states for the Hartree equation for random fields.
(2023) arXiv:2303.02907.


\bibitem{1998KT}
Keel, M., Tao, T.:
Endpoint Strichartz estimates.
Amer. J. Math. {\bf 120} (1998), no.5, 955--980.



\bibitem{2015LS}
Lewin, M., Sabin, J.: The Hartree equation for infinitely many particles I. Well-posedness theory. Comm. Math. Phys. {\bf 334} (2015), no.1, 117--170.

\bibitem{2014LS}
Lewin, M., Sabin, J.: The Hartree equation for infinitely many particles II: Dispersion and scattering in 2D. Anal. PDE {\bf 7} (2014), no. 6, 1339--1363.

\bibitem{2020LS}
Lewin, M., Sabin, J.:
The Hartree and Vlasov equations at positive density.
Comm. Partial Differential Equations {\bf 45} (2020), no. 12, 1702--1754.

\bibitem{2021PS}
Pusateri, F., Sigal, I.M.:
Long-time behaviour of time-dependent density functional theory.
Arch. Ration. Mech. Anal. {\bf 241} (2021), no. 1, 447--473.

\bibitem{1975SS}
Seiler, E., Simon, B.:
Bounds in the Yukawa2 quantum field theory: upper bound on the pressure, Hamiltonian bound and linear lower bound.
Comm. Math. Phys. {\bf 45} (1975), no. 2, 99--114.

\bibitem{2005Simon}
Simon, B.:
{\em Trace ideals and their applications, Second edition.}
Mathematical Surveys and Monographs. 
{\bf 120}. American Mathematical Society, (2005).

\bibitem{2015S}
de Suzzoni, A.-S.:
An equation on random variables and systems of fermions.
(2015) arXiv:1507.06180.

\bibitem{1992Z}
Zagatti, S.:
The Cauchy problem for Hartree-Fock time-dependent equations.
Ann. Inst. H. Poincaré Phys. Théor. {\bf 56} (1992), 357--374.
\end{thebibliography}
\end{document}